\theoremstyle{plain}
\newtheorem{lemma}{Lemma}[section]
\newtheorem{proposition}{Proposition}[section]
\newtheorem{corollary}{Corollary}[section]
\newtheorem{example}{Example}
\newtheorem*{mydef}{Teorema A}
\newtheorem*{mydef2}{Theorem B}
\newtheorem*{mydef3}{Theorem C}
\newtheorem*{mydef4}{Theorem D}
\newtheorem*{mydef5}{Theorem E}
\newtheorem*{mydef6}{Theorem F}
\theoremstyle{definition}
\newtheorem{definition}{Definition}[section]
\theoremstyle{remark}
\newtheorem*{remark}{Remark}
\title{Counting and Hausdorff measures for integers and $p$-adic integers}
\author{Davi Lima}
\address{Davi Lima: Instituto de Matem\'atica, Universidade Federal de Alagoas - UFAL, Av. Lourival Melo Mota s/n, Maceio, Alagoas, Brazil, 57072-970}
\email{davi.santos@im.ufal.br}
\author{Alex Zamudio Espinosa}
\address{Alex Zamudio Espinosa: Universidade Federal Fluminense - UFF, Rua Miguel de Farias 9,
Niter\'oi - Rio de Janeiro, Brazil
24220-900}
\email{ alexmze@id.uff.br.}
\begin{document}

\begin{abstract}
    In this work, we aim to advance the development of a fractal theory for sets of integers. The core idea is to utilize the fractal structure of $p$-adic integers, where $p$ is a prime number, and compare this with conventional densities and counting measures for integers.. Our approach yields some results in combinatorial number theory. The results show how the local fractal structure of a set in $\mathbb{Z}_p$ can provide bounds for the counting measure for its projection onto $\mathbb{Z}$. Additionally, we establish a relationship between the counting dimension of a set of integers and its box-counting dimension within $\mathbb{Z}_p$. Since our results pertain to sets that are projections of closed sets in $\mathbb{Z}_p$, we also provide both necessary and sufficient combinatorial conditions for a set $E \subset \mathbb{Z}$ to be the projection of a closed set in $\mathbb{Z}_p$.
\end{abstract}

\maketitle

\section{Introduction}

There are several notions of how to measure subsets of $\mathbb{Z}$, for instance, the asymptotic and upper Banach density. Let us remember the definition of upper Banach density of a set $E\subset \mathbb{Z}$. We denote $I=(M,N]=\{M+1,M+2,...,N\}\subset \mathbb{Z}$ an interval of integers and $|I|=N-M$ its length. The upper Banach density of $E$ (sometimes just Banach density of $E$) is defined as
            \begin{equation}
            d^*(E)=\limsup_{|I|\to +\infty}\dfrac{|E\cap I|}{|I|},
            \end{equation}
        where we write $|A|$ for the number of elements of a set $A.$
The Banach density can be used as a tool to compare subsets of $\mathbb{Z}$, a ``bigger" subset would have bigger Banach density. But when we deal with sets having zero Banach density then we need other tools to distinguish them. This is similar to when we have two sets of real numbers with zero Lebesgue measure, in such case one usual tool to compare them is the Hausdorff dimension and Hausdorff measures. Following this idea, Y. Lima and C. G. Moreira, contributed with a Theory of fractal subsets in $\mathbb{Z}$, in \cite{YG}, introducing the notion of $s$-counting measure and Counting Dimension of a set $E\subset \mathbb{Z}$. Notions of dimension for subsets of $\mathbb{Z}$ and $\mathbb{Z}^d$ also had been studied before Lima and Moreira by J. Naudts \cite{N1, N2},
T. Barlow and S. Taylor \cite{BT}, A. Iosevich, M. Rudnev, and I. Uriarte-Tuero \cite{IRU}. More recently, D. Glasscock in \cite{G} extended the work of Y. Lima and C. G. Moreira to the context of $\mathbb{Z}^d$. In \cite{GMR}, D. Glasscock, J. Moreira and F. Richter study aspects of geometric transversality in sets of $\mathbb{Z}$ using a machinery of fractal geometry of $\mathbb{R}^d$. In the first moment the results, following the results in \cite{YG} we are induced to imagine the counting dimension as an analogue of the Hausdorff dimension, this is the spirit of the Marstrand Theorem for subsets of integers proved por Lima-Moreira. However, our work explores a different perspective on the counting dimension, suggesting that it is actually more closely related to the box-counting dimension. This interpretation is outlined in Theorem E below.

The aim of this paper is to advance this line of thought by introducing a novel method for "measuring" and estimating the dimension of subsets of $\mathbb{Z}$. Our approach is straightforward: consider a prime number $p$ and $\mathbb{Z}_p$, the set of $p$-adic integers. We view a set $E \subset \mathbb{Z}$ as a subset of $\mathbb{Z}_p$, take its $p$-adic closure, and investigate the connections between its fractal-geometric properties such as Hausdorff measures, box and Hausdorff dimensions and its analytical-combinatorial properties, such as densities, counting measures and counting dimension

%study its Hausdorff measure, box-counting dimension and Hausdorff dimension.

In this paper we denote by $\mathcal{P}(X)$ the power set of $X$. We also write the set of natural numbers by $\mathbb{N}=\{1,2,3,...\},$ the non-negative integers $\mathbb{N}_0=\{0,1,2,...\}$, the set of integers numbers $\mathbb{Z}=\{...,-2,-1,0,1,2,...\},$ and non-positive integers $\mathbb{Z}^{-}=-\mathbb{N}_0=\{...,-2,-1,0\}$. Consider $\mathcal{F}_p: \mathcal{P}(\mathbb{Z})\rightarrow \mathcal{P}(\mathbb{Z}_p)$ given by
        \begin{equation}
           \mathcal{F}_p(E)=\overline{E}^p
        \end{equation}
        this set is referred to as the closed embedding of $E$ in $\mathbb{Z}_p$. 
In the same way that we have the embedding $\mathcal{F}_p$ we also consider the projection $\Pi_p: \mathcal{P}(\mathbb{Z}_p)\rightarrow \mathcal{P}(\mathbb{Z})$ given by   
            \begin{equation}
            \Pi_p(F)=F\cap \mathbb{Z}.
            \end{equation}
            
        Then, our idea is to consider the set $\Pi_p(\mathcal{F}_p(E))$ and  relate fractal-geometric properties of it and analytical-combinatorial properties of $E$. We have $E\subset \Pi_p(\mathcal{F}_p(E))$, on the other hand we cannot expect that equality holds because new integers elements may arise in $\mathcal{F}_p(E)$. We give some examples in subsection \ref{subsection nmap}. This same observation, when we want to study projections of sets entirely contained in $\mathbb{N}$, forces us to consider the $p$-adic projection $\Pi_p$ slightly different: we consider $\Pi_p^{\ast}(F)=F\cap \mathbb{N}$. If fact, if $E\subset \mathbb{N}$ then it is more reasonable to take 
        $$\Pi^{\ast}_p(\mathcal{F}_p(E))=\mathcal{F}_p(E)\cap \mathbb{N}$$ 
        instead of $\mathcal{F}_p(E)\cap \mathbb{Z}$,
        however we continue using $\Pi_p$ by simplicity. It follows from the above considerations that the classes
            $$\mathcal{S}_p=\{E\subset \mathbb{Z}; E=\Pi_p(\mathcal{F}_p(E))=\mathcal{F}_p(E)\cap \mathbb{Z}\}$$
            or
            $$\mathcal{S}^{\ast}_p=\{E\subset \mathbb{N};E=\Pi_p(\mathcal{F}_p(E))=\mathcal{F}_p(E)\cap \mathbb{N}\}$$
            are important in the development of this work.
         In any case we have that $\mathcal{S}_p$, $\mathcal{P}(\mathbb{Z})\setminus \mathcal{S}_p$, $\mathcal{S}^{\ast}_p$, $\mathcal{P}(\mathbb{N})\setminus \mathcal{S}^{\ast}_p$ are uncountable classes\footnote{To see why $\mathcal{P}(\mathbb{Z})\setminus \mathcal{S}_p$, $\mathcal{P}(\mathbb{N})\setminus \mathcal{S}^{\ast}_p$ are uncountable, notice that given any infinite subset $W\subset \mathbb{N}$ one has that $\{1+p^{n}:\,\, n\in W\}$ is not in $\mathcal{S}_p$ or $\mathcal{S}^{\ast}_p$.}. It is important to provide sufficient and necessary combinatorial conditions under which a subset $E\subset \mathbb{N}$ is in $\mathcal{S}^{\ast}_p$ or $E\subset \mathbb{Z}$  belongs to $\mathcal{S}_p.$ We address some sufficient and necessary combinatorial conditions in subsection \ref{subsection nmap}. 
        
        Since $E\subset \Pi_p(\mathcal{F}_p(E))$ we have $d^{\ast}(E)\le d^{\ast}(\Pi_p(\mathcal{F}_p(E)))$ for any $p$. The space $\mathbb{Z}_p$, with the usual metric, has Hausdorff dimension $1$ and the $1$-dimensional Hausdorff measure coincides with its Haar measure. We denote this Haar measure by $\mu_p$ and remind that on balls one has $\mu_p(B_{p^{-k}}(x))=p^{-k}$, that is, $\mu_p$ is an uniformly distributed measure, in section \ref{sec2} we describe these objects. %We remember that the asymptotic density for a subset $E\subset \mathbb{Z}$ is given by 
% $$d(E)=\limsup_{N\to \infty}\dfrac{|E\cap [-N,N]|}{2N+1},$$ if $E\subset \mathbb{N}$ we just consider $$d(E)=\limsup_{N\to \infty}\dfrac{|E\cap [1,N]|}{N}.$$ 
Our first result, Theorem A, which is indeed similar to an instance of Furstenberg's Correspondence Principle, shows that the upper Banach density and the Haar measure can be related. The boundary of a set $F\subset \mathbb{Z}_p$ is denoted by $\partial F$.
        
\begin{mydef}\label{P1}
        Consider $p$ a prime number. If $F\subset \mathbb{Z}_p$ is a closed set then  $$d^*(\Pi_p(F))\le \mu_p(F).$$
        If $\mu_p(\partial F)=0$ then the equality holds.

        %Moreover, if $F$ is also an open set then $d(\Pi_p(F))=d^*(\Pi_p(F))=\mu_p(F)$.
        \end{mydef}
%We point out that the Proposition A is closely related with the Furstenberg's correspondence principle and we give two proofs for it.  
The arithmetic difference between two sets in $\mathbb{Z}$, say, $A$ and $B$ is
    $$A-B=\{m-n;m\in A, n\in B\}.$$
The following corollary can be obtained by combinatorial simple methods but we write as a consequence of our first result.
        \begin{corollary}\label{cor.1.1}
 Consider $E, F\subset \mathbb{Z}$ such that $d^{\ast}(E)+d^{\ast}(F)>1$ then for every prime number $p$ and for every $n
 \in \mathbb{N}$ the arithmetic difference $E-F$ contains multiples of $p^n$. 
\end{corollary}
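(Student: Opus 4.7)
The strategy is to apply Theorem A to pass from the density hypothesis on $E,F\subset\mathbb{Z}$ to a Haar-measure inequality on their $p$-adic closures, and then exploit the clopen ball structure of $\mathbb{Z}_p$ to read off residues modulo $p^n$. First, since $\mathcal{F}_p(E)=\overline{E}^{\,p}$ and $\mathcal{F}_p(F)=\overline{F}^{\,p}$ are closed subsets of $\mathbb{Z}_p$ containing $E$ and $F$ respectively, Theorem A yields
$$d^{\ast}(E)\le d^{\ast}(\Pi_p(\mathcal{F}_p(E)))\le \mu_p(\mathcal{F}_p(E)),\qquad d^{\ast}(F)\le \mu_p(\mathcal{F}_p(F)),$$
so the hypothesis $d^{\ast}(E)+d^{\ast}(F)>1$ gives $\mu_p(\mathcal{F}_p(E))+\mu_p(\mathcal{F}_p(F))>1$.

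Next I would argue by contradiction. Fix a prime $p$ and an integer $n\ge 1$, and assume $(E-F)\cap p^n\mathbb{Z}=\emptyset$; this is equivalent to saying that no residue class modulo $p^n$ meets both $E$ and $F$. Let
$$R_E=\{r\in\{0,1,\dots,p^n-1\}:E\cap(r+p^n\mathbb{Z})\neq\emptyset\},$$
and define $R_F$ analogously. The contrapositive assumption is exactly $R_E\cap R_F=\emptyset$, hence $|R_E|+|R_F|\le p^n$.

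The key step is then to transport this residue information to the Haar measure. The balls $B_{p^{-n}}(r)=r+p^n\mathbb{Z}_p$ for $r\in\{0,\dots,p^n-1\}$ are clopen in $\mathbb{Z}_p$, partition it into $p^n$ pieces of measure $p^{-n}$, and satisfy $B_{p^{-n}}(r)\cap\mathbb{Z}=r+p^n\mathbb{Z}$. Since each such ball is closed, the $p$-adic closure distributes over this partition, giving $\mathcal{F}_p(E)\cap B_{p^{-n}}(r)=\overline{E\cap(r+p^n\mathbb{Z})}^{\,p}$, which is empty precisely when $r\notin R_E$. Consequently $\mathcal{F}_p(E)$ is contained in the union of the $|R_E|$ balls indexed by $R_E$, so $\mu_p(\mathcal{F}_p(E))\le |R_E|p^{-n}$, and symmetrically for $F$.

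Adding these two bounds produces
$$\mu_p(\mathcal{F}_p(E))+\mu_p(\mathcal{F}_p(F))\le (|R_E|+|R_F|)p^{-n}\le 1,$$
contradicting the strict inequality obtained from Theorem A; thus $E-F$ must contain a multiple of $p^n$. There is no serious obstacle: the only point requiring care is verifying that $\overline{E}^{\,p}$ respects the clopen decomposition into residue balls, which follows because each $r+p^n\mathbb{Z}_p$ is simultaneously open and closed, so closures commute with intersection by such a set.
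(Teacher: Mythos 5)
Your proof is correct, and it uses the same essential input as the paper: Theorem A applied to the closed embeddings, giving $\mu_p(\mathcal{F}_p(E))+\mu_p(\mathcal{F}_p(F))>1$. Where you diverge is in how the arithmetic conclusion is extracted. The paper argues directly: since $\mu_p$ is a probability measure, the sum exceeding $1$ forces $\mu_p(\mathcal{F}_p(E)\cap\mathcal{F}_p(F))>0$, hence the intersection contains a point $x$; approximating $x$ by $m\in E$ and $n'\in F$ to within $p^{-k}$ gives $|m-n'|_p\le p^{-k}$ simultaneously for all $k$ from the single point $x$. You instead fix $n$, pass to the contrapositive, and count residue classes: $R_E\cap R_F=\emptyset$ implies $|R_E|+|R_F|\le p^n$, and covering each closure by its residue balls bounds the measure sum by $1$. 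Both arguments are sound; your clopen-decomposition step ($\overline{E}^{\,p}\cap B_{p^{-n}}(r)=\overline{E\cap(r+p^n\mathbb{Z})}^{\,p}$) is valid, though for the covering bound you only need the weaker containment $\mathcal{F}_p(E)\subset\bigcup_{r\in R_E}B_{p^{-n}}(r)$, which holds because that union is closed and contains $E$. The paper's route is marginally slicker in that one point $x$ handles all $n$ at once and makes the "simultaneous approximation" mechanism visible; your route makes the underlying pigeonhole on residue classes explicit and arguably shows most clearly that the $p$-adic machinery is a repackaging of an elementary counting fact, as the paper itself remarks.
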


The notion of upper Banach density can be generalized by the $s$-counting measure, see \cite{YG}. Given $E\subset \mathbb{Z}$, its $s$-counting measure is defined by
    $$\mathcal{H}_s(E)=\limsup_{|I|\to \infty}\dfrac{|E\cap I|}{|I|^s}$$
where $I$ runs through all intervals in $\mathbb{Z}$. These are global notions of how the set $E$ is distributed. We observe that $\mathcal{H}_1=d^{\ast}$. It is natural to ask if Theorem A is in fact more general if we try to change $d^*$ by $\mathcal{H}_s$, for a certain $s$, and $\mu_p$ by the $s$-Hausdorff measure in $\mathbb{Z}_p$. Pursuing such goal we obtained our second main result.

We can try to recover $\mathcal{H}_s(E)$ from the local behaviour of the closed embedding $\mathcal{F}_p(E)$ in some $p$-adic integer ring. 
First of all, given $F\subset \mathbb{Z}_p$ and  $n\in \mathbb{N},$ define
    $$\eta_s(F;x,n)=\limsup_{|I|\to \infty}\dfrac{|F\cap B_{p^{-n}}(x)\cap I|}{|B_{p^{-n}}(x)\cap I|^s}$$
    and
    $$\eta_s(F;x)=\limsup_{n\to \infty} \eta_s(F;x,n),$$
   we shall see in subsection \ref{subs3.4} that the $\eta_s$ is a local density with respect to $\mathcal{H}_s$. Denote by $\mathcal{H}^s$ the $s$-dimensional Hausdorff measure on Borel $\sigma$-algebra of $\mathbb{Z}_p$. In fact, Theorem A is generalized by
%and
%   $$\nu_s(F;x,n)=\liminf_{|I|\to \infty}\dfrac{|F\cap B_{p^{-n}}(x)\cap I|}{|B_{p^{-n}}(x)\cap I|^s}.$$

%\begin{definition}
%The upper and lower $s$-counting local measures of $F\subset \mathbb{Z}_p$ at $x$ are respectively 
%$$\eta_s(F,x)=\limsup_{n\to \infty}\eta_s(F;x,n)
%$$
%and
%$$\nu_s(F,x)=\liminf_{n\to %\infty}\eta_s(F;x,n),$$
%where $\eta_s(F,x), \ \nu_s(F,x)\in [0,+\infty]. $

%\end{definition}   

\begin{mydef2}\label{mydef2.0}
Let $F\subset \mathbb{Z}_p$ be a closed set and assume $\mathcal{H}^s(F)<\infty$. Suppose that there is a constant $C>0$ such that
\[\limsup_{|I|\to +\infty}\frac{|F\cap B\cap I|}{|B\cap I|^s} \le C,\]
for any ball $B\subset \mathbb{Z}_p$. Then we have
\[\mathcal{H}_s(\Pi_p(F))\le \int_{F} \eta_s(F;x) d\mathcal{H}^s(x).\]
\end{mydef2}
%In a similar way we define the lower Banach density, $s$-counting measure and
%$s$-counting local measure of $F\subset \mathbb{Z}_p$ at $x$ changing the limsup by liminf. (NÃO ENTENDI ESSA PARTE)
%$$\nu_s(F,x)=\liminf_{n\to \infty}\nu_s(F;x,n).
%$$
%Our main result is the following
The uniform upper bound stated in Theorem B can be relaxed as demonstrated in the following theorem. However, an additional hypothesis is required to ensure the existence of a positive and finite constant that provides useful arithmetic information.
\begin{mydef3}\label{mydef2}
Let $F\subset \mathbb{Z}_p$ be a closed set such that $\mathcal{H}^s(F)< \infty$ and assume that there is an $\mathcal{H}^s$-integrable function $g$ such that $\eta_s(F;x,n)\le g(x)$ for $\mathcal{H}^s$-almost all $x$ in $F$. Then we can find an explicit constant $\overline{\theta}_{\ast}^s(F)\in [0,+\infty]$ such that 
\begin{equation}
\int_F \eta_s(F;x)d\mathcal{H}^s(x)\ge \overline{\theta}_{\ast}^s(F) \mathcal{H}_s(\Pi_p(F)).
\end{equation}
More is true: If $F$ is a $s$-Ahlfors-David regular set then $$0<\overline{\theta}_{\ast}^s(F)<+\infty.$$

%There Assume that there exists $C>0$ such that for any $x\in \mathcal{F}_p(F)$ and $n\in \mathbb{N}$ we have $\eta_s(F;x,n)\le C$. Then
%\[\mathcal{H}_s(F\cap \mathbb{N})\leq \int_{\mathcal{F}_p(F)} \eta_s(F;x) d\mathcal{H}^s(x).\]
\end{mydef3}
%\begin{mydef2}\label{mydef2}
%Let $F\subset \mathbb{Z}_p$ such that \mathcal{H}^s(F)< \infty$ and assume that
%\[\mathcal{H}_s(\{x\in F: \eta_s(F;x)=\infty\}\cap \mathbb{N})=0,\]
%then
%\[\mathcal{H}_s(F\cap \mathbb{N})\leq \int_{F} \eta_s(F;x) d\mathcal{H}^s(x).\]
%\end{mydef2}

We can use Theorems B and C to give some applications in number theory. Fix $\mathcal{F}\subset \mathcal{P}(\mathbb{Z})$ and $s>0$. We shall say $E\subset \mathbb{Z}$ is $s$-abundant in $\mathcal{F}$ if there exists a sequence of finite sets $\{A_j\}_{j\in \mathbb{N}}\subset \mathcal{F}$ and $c>0$ such that $|A_{k+1}|>|A_k|$ and
       $$|E\cap A_k|\ge c\cdot |A_k|^s.$$
Note that if $E$ contains arithmetic progressions of arbitrarily length then $E$ is $1$-abundant on $\mathcal{A}_f$, the set of finite arithmetic progressions.  Consider $\mathcal{A}^r_f$ the set of arithmetic progressions with common difference $r$. Consider $p$ a prime number. We have the following result

\begin{mydef4}\label{cor1.3}
    Let $F\subset \mathbb{Z}_p$ be a closed set such that $\mathcal{H}_s(\Pi_p(F))>0$. Suppose that $\mathcal{H}^s(F)<+\infty$. If either  
    \begin{enumerate}
    \item[(a)] there is a constant $C>0$ such that
\[\limsup_{|I|\to +\infty}\frac{|F\cap B\cap I|}{|B\cap I|^s} \le C,\]
for any ball $B\subset \mathbb{Z}_p$ or
    \item[(b)] There is an  $\mathcal{H}^s$-integrable function $g$ such that $\eta_s(F,x, n)\le g(x)$ for $\mathcal{H}^s$-almost all $x\in F$ and $F$ is $s$-Ahlfors-David regular.
    \end{enumerate} 
    then, for an unbounded sequence $\{n_j\}_j$ of natural numbers we have that $\Pi_p(F)\subset \mathbb{Z}$ is $s$-abundant in $\mathcal{A}^{p^{n_j}}_f$. %In particular, if $E\subset \mathbb{Z}$ is such that $\Pi_p(\mathcal{F}_p(E))=E$, we have that $E$ is $s$-abundant in $\mathcal{A}^{p^{n_j}}$
    %In particular, for any $n\ge 1$ the set $E-E$ contains infinitely many elements multiples of $p^n$.
\end{mydef4}

In the direction of a fractal theory on sets of integers numbers, we also want to understand the idea of dimension. Following \cite{YG}, we define the counting dimension of a set $E\subset \mathbb{Z}$ as 
    $$D(E)=\limsup_{|I|\to \infty}\dfrac{\log |E\cap I|}{\log|I|}.$$
Write $s_0=D(E)$. By the definition of $\mathcal{H}_s(E)$, we have that $s_0\ge 0$ is such that $\mathcal{H}_s(E)=\infty$ for $s<s_0$, and $\mathcal{H}_s(E)=0$ for $s>s_0$. Sets with positive density have counting dimension 1. It is natural to ask if the counting dimension of a set $E$ should really be considered as analogous to the Hausdorff dimension of the set $\mathcal{F}_p(E)$. In the next theorem we will see that this is not always the case, we show that $D(E)$ can always be compared with the box-counting dimension of $\mathcal{F}_p(E)$. We also give an example in which the Hausdorff dimension of $\mathcal{F}_p(E)$ is strictly smaller than the counting dimension of $E$, contrary to what one would expect from theorems like Theorem B and C. We observe that the map $\mathcal{F}_p$ can be defined on $\mathcal{P}(\mathbb{Z}_p)$ in the same way: $F\mapsto \bar{F}^p$.

%We could ask if $$D(\Pi_p(\mathcal{F}_p(E)))\le HD(\mathcal{F}_p(E))$$
%where $HD(A)$ is the Hausdorff dimension of $A$. This is not true. As we shall see we can obtain a week version of it replacing the Hausdorff dimension by the so called Box-counting dimension.

\begin{mydef5}
If $F\subset \mathbb{Z}_p$ then
    $$D(\Pi_p(\mathcal{F}_p(F)))\le BD(F),$$
   where $BD(F)$ is the (upper)Box-counting dimension of $F$. In particular, $D(E)\le BD(E)$ for any $E\subset \mathbb{Z}$. Moreover, there is a closed set $F\subset \mathbb{Z}_5$ such that 
    $$D(\Pi_5(F))=BD(F)>\underline{BD}(F)\ge HD(F),$$
     where $\underline{BD}(F)$ is the lower Box-dimension of $F$.
\end{mydef5}
In the proof of this theorem it will be clear that the prime $5$ is not special, we could have chosen another prime. 

Using the notion of Box-counting dimension and our approach in the subsection \ref{subsection nmap} have the following result

\begin{mydef6}
Let $E\in \mathcal{S}_p$ or $E\in \mathcal{S}^{\ast}_p$ such that $BD(E)=s$. Then for any $\delta>0$, for $n\in \mathbb{N}$ large enough there are at most $k_n\le p^{n(s+\delta)}$ arithmetic progressions, $A^{(n)}_1,...,A^{(n)}_{k_n}$ of common difference $p^n$ such that $E\subset \cup_{i=1}^{k_n}A^{(n)}_i$, $\cup_{i=1}^{k_{n+1}}A^{(n+1)}_i\subset \cup_{i=1}^{k_n}A^{(n)}_i$ and
    $$E=\bigcap_{n\ge 1}\bigcup_{i=1}^{k_n}A^{(n)}_i.$$
\end{mydef6}
Throughout this work if we consider $E\subset \mathbb{N}$ then the arithmetic progressions are of natural numbers. The above result tell us that any set $E\in \mathcal{S}_p$ is a countable intersection of open sets in the profinite topology of $\mathbb{Z}$.

%\textcolor{blue}{EXISTEM DIFERENÇAS IMPORTANTES AO PROJETAR EM $\mathbb{N}$ E AO PROJETAR EM $\mathbb{Z}$ QUANDO QUEREMOS ESTUDAR $\{E\subset \mathbb{Z}; E=\Pi_p(\mathcal{F}_p(E))\}$ AS VEZES \'E MELHOR TOMAR $\Pi_p(E)=E\cap \mathbb{N}$.  NO FINAL DO ARTIGO OS EXEMPLOS DEIXARÃO CLARO. TODAVIA, PARA OS RESULTADOS ESCRITOS PROJETAR EM $\mathbb{N}$ ou $\mathbb{Z}$ N\~AO FAZ DIFERENÇA}

%In the last section of this work we will give some examples of the elements that we introduced.
%In the last section of this work we give some examples.
%\begin{theorem}
%Let $F\subset \mathbb{Z}_p$ such that $H^s(F)< \infty$ and assume that
%\[H_s(\{x\in F: \eta_s(x,F)=\infty\}\cap \mathbb{N})=0,\]
%then
%\[H_s(F\cap \mathbb{N})\leq \int_{F} \eta_s(x,F) dH^s(x).\]
%\end{theorem}

\section{Preliminaries}\label{sec2}

In this section we present some preliminaries which are used in this work. Throughout the paper $p$ is a prime number. 

\subsection{The $p$-adic Integers}
Given $n\in \mathbb{Z}\setminus \{0\}$ we take $v_p(n)$ the $p$-adic valuation of $n$, that is, $n=p^{v_p(n)}\cdot r$ where $\mbox{gcd}(p,r)=1$ and $r\in \mathbb{Z}$, if $n=0$ one defines $v_p(0)=\infty$. If $x=\dfrac{a}{b}\in \mathbb{Q}$ then $v_p(x)=v_p(a)-v_p(b).$ Using $v_p$ we can define an absolute value on $\mathbb{Q}$ given by $|x|_p=p^{-v_p(x)}$, and from this one defines the ultrametric $d_p(x,y)=|x-y|_p$.
The completion of $(\mathbb{Q},d_p)$ is denoted by $\mathbb{Q}_p$ and this is the set of $p$-adic numbers. %The closure of a set $F\subset \mathbb{Q}_p$ with respect to the topology generated by $d_p$ is denoted by $\overline{F}^p$.
%For a set $E\subset \mathbb{Z}$ we define $\mathcal{F}_p(E)=\overline{E}^p$. For $F\subset \mathbb{Q}_p$ we use the notation $\Pi_p(F)=F\cap \mathbb{N}$.

The unit ball
	$$\mathbb{Z}_p=\{x\in \mathbb{Q}_p;|x|_p\le 1\}$$
is called the set of $p$-adic integers and it has several remarkable properties.	We list some of them:

\begin{itemize}
\item $(\mathbb{Z}_p,+)$ is a compact topological group and $\mathcal{F}_p(\mathbb{N})=\mathcal{F}_p(\mathbb{Z})=\mathbb{Z}_p.$

\item Every $x\in \mathbb{Z}_p$ can be written as a power series $x=a_0+a_1p+a_2p^2+...$, where $a_i\in \{0,1,2,...,p-1\}.$

%\item $\mathbb{Z}_p$ is the inverse limite of $\mathbb{Z}/p^n\mathbb{Z}$, that is, $\mathbb{Z}_p=\varprojlim \mathbb{Z}/p^n\mathbb{Z}$

\item There is a probability measure $\mu_p$, the Haar measure, defined on the Borel subsets of $\mathbb{Z}_p$ such that $\mu_p(B_r(x))=r$, where $r=p^{-n}$ for some $n\ge 0$ and $B_r(x)= \{y\in \mathbb{Z}_p:\,\, |x-y|_p\leq r\}$. Sometimes we use the notation $B(x;r)$ to denote the ball $B_r(x)$.
\end{itemize}

Any $p$-adic integer can be associated to an infinite string $(a_0,a_1,...)$ in the set $\{0,1,...,p-1\}^{\mathbb{N}}$ and the measure $\mu_p$ coincides with the Bernoulli measure. Any ball $B_{p^{-n}}(x)=x(\mbox{mod.} \ p^n)+p^n\mathbb{Z}_p$ is in fact a cylinder. 

Throughout the paper we will write $\mathbb{U}=\{x\in \mathbb{Z}_p; |x|_p=1\}$. We observe that $\mathbb{U}$ is the set of units of $\mathbb{Z}_p$. It is easy to see that $p\mathbb{Z}_p=\mathbb{Z}_p\setminus \mathbb{U}$ is the unique maximal ideal of $\mathbb{Z}_p$, that is, $\mathbb{Z}_p$ is a local ring.

\subsection{Ergodic Theory on $\mathbb{Z}_p$}\label{Ergodic Theory}
Given a Borel measurable map $T:\mathbb{Z}_p\rightarrow \mathbb{Z}_p$, we say that a Borel measure $\mu$ is invariant by $T$ if $\mu(T^{-1}(A))=\mu(A)$ for all Borel sets $A$. We say that $(T,\mu)$ is an ergodic system if $\mu(A)=0$ or $\mu(A^c)=0$ for any set $A$ such that $\mu(A\Delta T^{-1}(A))=0$, where $A\Delta B=(A\setminus B) \cup (B\setminus A)$. A map $T:\mathbb{Z}_p\rightarrow \mathbb{Z}_p$ is uniquely ergodic if there exists only one probability invariant ergodic measure of it. %If $\mu$ is the unique invariant probability measure of $T$ and $f$ is a continuous function from $\mathbb{Z}_p$ to $\mathbb{R}$ then
	%\begin{equation}\label{eq1}\lim_{N\to \infty}\dfrac{1}{N}\sum_{j=0}^{N-1} f(T^j(x))=\int_{\mathbb{Z}_p}f \ d\mu, \ \forall \ x\in \mathbb{Z}_p.
	%\end{equation}

 If $T:\mathbb{Z}_p\rightarrow \mathbb{Z}_p$ is given by $Tx=x+1$ then the Haar measure $\mu_p$ is the unique invariant ergodic probability measure of $T$. The map $T$ is the natural adding machine on $\mathbb{Z}_p$, see \cite{KM}.

More generally, Fan, Li, Yao and Zhou showed in \cite{FLYZ} that $T(x)=ax+b$ has only one invariant measure if $b\in \mathbb{U}$  and  $a\in B_{r_p}(1) \ \mbox{where} \ r_p=1, \mbox{ for} \ p\ge 3$ and $r_p=2, \ \mbox{if} \ p=2$  .

Let $\mathcal{M}(\mathbb{Z}_p)$ be the set of Borel probability measure on $\mathbb{Z}_p$. The weak$^{\ast}$-topology on $\mathcal{M} (\mathbb{Z}_p)$ is such that 

\begin{enumerate}
\item[a)] If $\mu_n\to \mu$ then $\liminf_n \mu_n(A)\ge \mu(A)$ for any open set $A$; 

\item[b)] If $\mu_n\to \mu$ then $\limsup_n \mu_n(F)\le \mu(F)$ for any closed set $F$. 
\end{enumerate}

In particular, if $\mu_n\to \mu$ in the weak$^{\ast}$ topology and if $F$ is a clopen subset of $\mathbb{Z}_p$ then $\liminf_n\mu_n(F)\ge \mu(F)\ge \limsup_n \mu_n(F)$, that is,
\begin{equation}\label{portmanteau}
    \lim_n\mu_n(F)=\mu(F).
\end{equation}
The equality (\ref{portmanteau}) in fact holds if $\mu_p(\partial F)=0.$ 

Fix a continuous map $T:\mathbb{Z}_p\rightarrow \mathbb{Z}_p$. This induces a continuous map $T_{\ast}:\mathcal{M}(\mathbb{Z}_p)\rightarrow \mathcal{M}(\mathbb{Z}_p)$ defined by $T_{\ast}\mu(S)=\mu(T^{-1}(S))$. In particular, $\mu$ is an invariant measure for $T$ if and only if $\mu$ is a fixed point of $T_{\ast}$. It is well know (see for instance \cite{KM}) that for every collection of intervals $\{I_n\}\subset \mathbb{N}$, with $\lim_n|I_n|=\infty$, any accumulation point of 
\begin{equation}\label{Portemanteau 2}
    \mu_n=\dfrac{1}{|I_n|}\sum_{j\in I_n}T^j_{\ast}\mu
\end{equation}
 is an invariant probability measure of $T$.
\subsection{Hausdorff Dimension and Box-Counting Dimension}

Let $\delta>0$. A $\delta$-cover $\mathcal{U}$ of a set $F\subset \mathbb{Z}_p$ is a family of balls $B_{r_i}(a_i)$, with $r_i<\delta$ for all $i$, such that $$ F\subset \cup_i B_{r_i}(a_i).$$ We define
	$$\mathcal{H}^s_{\delta}(F)=\inf_{\mathcal{U}} \left\{\sum_i r^s_i;  \  \mathcal{U}=\{B_{r_i}(a_i)\}_i \ \mbox{is a} \ \delta \text{-cover of} \ F\right\}.$$
The $s$-outer Hausdorff measure is given by
		$$\mathcal{H}^s(F)=\lim_{\delta\to 0} \mathcal{H}^s_{\delta}(F).$$
	\begin{definition}
	The Hausdorff dimension of $F$ is given by 
	$$HD(F)=\sup\{s>0; \mathcal{H}^s(F)=\infty\}=\inf \{s>0; \mathcal{H}^s(F)=0\}.$$
	\end{definition}
\begin{remark}
	If we write $s_0=HD(F)$, it is not guaranteed that $0<\mathcal{H}^{s_0}(F)<+\infty$. A set with this property is called \textit{regular} with respect to the Hausdorff measure.
\end{remark}

We point out to the reader that the definition of Hausdorff measure and Hausdorff dimension is easily adapted to any metric space.
	The Hausdorff dimension of $\mathbb{Z}_p$ is equal to 1. The $s$-Hausdorff measure has the following property which will be useful for us: Let $(X,d)$ be a metric space, and $S:X\rightarrow \mathbb{Z}_p$ a map such that 
$|S(x)-S(y)|_p\le Cd(x,y)$, then
		\begin{equation}\label{similaridade}\mathcal{H}^s(S(X))\le C^s\mathcal{H}^s(X).
        \end{equation}

\begin{definition}
     Consider $X$ a metric space. A closed set $F\subset X$ is $s$-Ahlfors-David regular if there exists a constant $C>0$ such that for every $x\in F$ one has
    \begin{equation}\label{ADset}
    C^{-1}\le \dfrac{\mathcal{H}^s(F\cap B_r(x))}{r^s}\le C.
    \end{equation}

    \begin{remark}
        We note that if $F$ is $s$-Ahlfors-David regular and $S$ is a bi-Lipschitz map then $S(F)$ is also $s$-Ahlfors-David regular.
    \end{remark}
\end{definition} 

%\begin{remark}
   %Any regular Cantor set $K$ in $\mathbb{R}$ is $s$-Alhfors-David regular set, where $s=HD(K)$ 
%\end{remark}

\begin{definition}The \textit{upper} Box-counting dimension or just Box-dimension of a subset $F\subset \mathbb{Z}_p$ is given by
    $$BD(F)=\limsup_{n\to \infty} \dfrac{\log N_{p^{-n}}(F)}{n\log p},$$
    where $N_{p^{-n}}(F)$ is the minimum number of balls of radius $p^{-n}$ needed to cover $F$.
    We write 
    $$\underline{BD}(F)=\liminf_{n\to \infty}\dfrac{\log N_{p^{-n}}(F)}{n\log p}$$
    for the lower Box-counting dimension of $F\subset \mathbb{Z}_p$.
\end{definition}

\begin{example}
    Consider the set $F=\{1/n; n\in \mathbb{Z} \text{ and }\ {\rm gcd}(n,p)=1\}.$ Note that $F\subset \mathbb{Z}_p$ and $F$ is dense in $\mathbb{U}$. For every $n\in \mathbb{N}$ we have $N_{p^{-n}}(F)=p^n-p^{n-1}$ and then
        $$BD(F)=\limsup_{n\to \infty} \dfrac{\log p^{n-1}(p-1)}{n\log p}=1.$$
    \end{example}    

We remark that the above set has Box-dimension smaller than or equal to $1/2$ if we use the euclidean metric of $\mathbb{Q}$. Note that we also have that if $F\subset \mathbb{Z}_p$ then \begin{equation}
    BD(F)=BD(\mathcal{F}_p(F)
    ).
\end{equation}

\begin{remark}
For any set $F\subset \mathbb{Z}_p$ we have 
    $$HD(F)\le \underline{BD}(F)\le BD(F).$$
\end{remark}

From now on we write $G_n(F):=G_n=\{a\in \mathbb{Z}/p^n\mathbb{Z}; B_{p^{-n}}(a)\cap F\neq \emptyset\}$\footnote{Notice that, given $a\in \mathbb{Z}/p^n\mathbb{Z}$, the ball $B_{p^{-n}}(a)$ is well defined.}. The set of balls centered at the elements of $G_n$ form a natural cover of $F$ at the level $n$, that is, we look for $\mathbb{Z}_p=\cup_{j=0}^{p^n-1}B_{p^{-n}}(j)$ and try to find the set of $j$ for which $B_{p^{-n}}(j)$ has non-empty intersection with $F$. In this context we also use the notation $r_n=p^{-n}$.

    %In fact, we shall show now that 
    %$$\mu_p(\mathcal{F}(\mathcal{V}))=1, \ %\mbox{for every} \ p\in \mathbb{P}, p>2.$$
   % For this porpose we will prove that %$\mathcal{F}_p(\mathcal{V})$ is fact %$\mathbb{Z}_p$.
   % Fixe $N\in \mathbb{N}$. Given $x\in %a+p^N\mathbb{Z}_p, 0\le a\le p^N-1$ such %that gcd$(a+1,p)=1$ we construct $m\in %\mathcal{V}$ such that $|x-m|_p<p^{-N}$ in %the follow way: Take $N\in \mathbb{N}$ such %that $p^{-N}<\varepsilon$. Since %gcd$(a+1,p)=1$ the set $B=\{a+1+kp^N;k\in %\mathbb{N}\}$ has infinitely many primes. %We take $a+1+kp^N=q\in B\cap \mathbb{P}$. %Note que 
  %  $$\phi(q)=a+kp^N.$$
  %  This implies that for some $z\in \mathbb{Z}_p$ we have
   % $$|x-\phi(q)|_p=|zp^N-kp^N|_p\le p^{-N}$$.
   % Thus we have
   % $$\mu_p(\mathcal{F}_p(\mathcal{V}))\ge %\dfrac{\phi(p^N)}{p^N}=1-1/p$$
    
    %Now we write %$p^j-1=(p-1)(1+p+p^2+...+p^{j-1}).$
    
   % $$|\mathcal{V}-\mathcal{V}|=\mathbb{N}$$

	%\begin{lemma}
%	If $F\subset \mathbb{Z}_p$ is a clopen set then
	%    $$HD(F)=BD(E)$$
%	   where $BD(E)$ is the box-dimension of $F$. 
%	\end{lemma}
	
	\section{Measure of integers and $p$-adic integers}
	
	\subsection{Measures on integers}
	Now we remind some notions of how we can measure subsets of integer numbers. %In section \ref{3}
	Given $E\subset \mathbb{Z}$ we define the $s$-counting measure of $E$ by 
		\begin{equation}\label{eq.H^}\mathcal{H}_{s}(E)=\limsup_{|I|\to \infty} \dfrac{|E\cap I|}{|I|^{s}},
		\end{equation}
	where $I$ runs over all intervals of integers numbers. We observe that 
	$$\mathcal{H}_1(E)=d^{\ast}(E)$$
	is the Banach density of $E$. 
	%In a similar way, we define the lower $s$-counting measure of $E$ by
	%\begin{equation}\label{eq.H_}
    %\underline{\mathcal{H}}_{s}(E)=\liminf_{|I|\to \infty}\dfrac{|E\cap I|}{|I|^s}.
	%\end{equation}
	
	Moreover, the \textit{counting dimension} of $E$ is the number
		\begin{equation}\label{eq.D}
		D(E)=\limsup_{|I|\to \infty} \dfrac{\log |E\cap I|}{\log |I|}.
		\end{equation}
%	For our goal we need only the Counting Dimension	of a set $E$.
Below we remark some facts related to $\mathcal{H}_s$ and $D$.
\begin{remark}
We note that if $s < D(E)$ then $\mathcal{H}_{s}(E)=\infty$ and if $t > D(E)$ then $\mathcal{H}_{t}(F)=0.$ This property of $D(E)$ is similar to the one for the Hausdorff dimension. In the same way, if $s_0=D(E)$ we cannot say anything about $\mathcal{H}_{s_0}(E)$. If $0<\mathcal{H}_{s_0}(E)<+\infty$ the set $F$ is called \textit{regular} with respect to the counting measure.
\end{remark}	

We collect some basic properties of $D$

\begin{itemize}
    \item $E\subset F \ \implies \ D(E)\le D(F);$
    
    \item $D(E\cup F)=\max\{D(E),D(F)\}$.
\end{itemize}

\begin{example}
 
 Denote by $\mathbb{P}$ the set of prime numbers. We know that $d^*(\mathbb{P})=0$. However $D(\mathbb{P})=1$. This follows from The Prime Number Theorem. In fact, since $|\mathbb{P}\cap [1,n]|\sim \dfrac{n}{\log n}$ we have
 $$1\ge D(\mathbb{P})\ge \limsup_{n\to \infty}\dfrac{\log |\mathbb{P}\cap [1,n]|}{\log n}=\limsup_{n\to \infty}\dfrac{\log n - \log \log n}{\log n}=1.$$
\end{example}

\subsection{Natural maps}
In order to compare $d^{\ast}$, $\mathcal{H}_s$ and $D$ with $\mu_p$, $\mathcal{H}^s$ and $BD$ we consider two natural maps:

\begin{definition}
 If $E\subset \mathbb{Z}_p$ we define $\mathcal{F}_p:\mathcal{P}(\mathbb{Z}_p)\rightarrow \mathcal{P}(\mathbb{Z}_p)$ by $\mathcal{F}_p(E)=\overline{E}^p$, the $p$-closure of $E$. 
 On the other hand, we define $\Pi_p:\mathcal{P}(\mathbb{Z}_p)\rightarrow \mathcal{P}(\mathbb{Z})$ given by $\Pi_p(E)=E\cap \mathbb{Z}$. Moreover, $\Pi_p^{\ast}(F)=F\cap \mathbb{N}$.
\end{definition}

\begin{remark} If $E\subset \mathbb{Z}$, then we can see $E$ as a subset of $\mathbb{Z}_p$ and it makes sense to write $\mathcal{F}_p(E)$. This set is called the closed embedding of $E$ in $\mathbb{Z}_p$. The maps $\Pi_p$ and $\Pi^{\ast}_p$ are the projection of $F$ on $\mathbb{Z}$ and $\mathbb{N}$, respectively. When we want to study $E\subset \mathbb{Z}$ such that $E\cap \mathbb{Z}^{-}\neq \emptyset$  we consider $\Pi_p(\mathcal{F}_p(E))=\mathcal{F}_p(E)\cap \mathbb{Z}$ and if $E\cap \mathbb{Z}^{-}=\emptyset$ we consider $\Pi^{\ast}_p(\mathcal{F}_p(E))=\mathcal{F}_p(E)\cap \mathbb{N}.$ Thus, in the last case we just write $\Pi_p$ even for $\Pi_p^{\ast},$ that is, if $E\subset \mathbb{N}$ then $\Pi_p(\mathcal{F}_p(E))=\mathcal{F}_p(E)\cap \mathbb{N}.$
\end{remark}

\begin{example}\label{ex3}
 
Consider $\mathbb{P}$ the set of prime numbers. We shall find $\mu_p(\mathcal{F}_p(\mathbb{P}))$. Take $x\in \mathbb{U}$ and $\varepsilon>0$. Since $\mathbb{U}=\mathbb{Z}_p\setminus p\mathbb{Z}_p$ we have $$\mathbb{U}=\bigcup_{j=0; (j,p)=1}^{p^n-1} (j+p^n\mathbb{Z}_p)$$
for any $n\in \mathbb{N}$. In particular, for every $n$ there exists $j$ with $(j,p)=1$ such that  $x\in j+p^n\mathbb{Z}_p$. By Dirichlet's theorem about primes in arithmetic progression, the set $A=\{j+kp^n;k\in \mathbb{N}\}$ has infinitely many primes. If $q\in A$ is a prime number, then     $$|x-q|_p\le |p^n|=p^{-n}.$$
If we take $n$ such that $p^{-n}<\varepsilon$ then $|x-q|_p<\varepsilon$. This implies that the $p$-adic closure of $\mathbb{P}$ is $\mathbb{U}\cup \{p\}$.

Since $\mathcal{F}(\mathbb{P})=\mathbb{U}\cup \{p\}$ and $\mathbb{Z}_p=\mathbb{U}\cup p\mathbb{Z}_p,$ we have
    $$\mu_p(\mathcal{F}_p(\mathbb{P}))=1-1/p.$$
Since $d^{\ast}(\mathbb{P})=0$, in this case, we have $\mu_p(\mathcal{F}_p(\mathbb{P}))>d^{\ast}(\mathbb{P})$.    
 \end{example}
 
  \begin{example} Let $\phi: \mathbb{N}\rightarrow \mathbb{N}$ be the Euler's Totient function, given by, 
        $$\phi(n)=\#\{1\le j<n;\mbox{gcd}(j,n)=1\}.$$
    Let $\mathcal{V}=\phi(\mathbb{N})$ be the set of Totients. Since $\phi(\mathbb{P})=\mathbb{P}-1$ we have
    $$\mu_p(\mathcal{F}_p(\mathcal{V}))\ge 1-1/p.$$ %This implies that $HD(\mathcal{F}_p(\mathcal{V}))=1.$
    If $p=2$ then $\mu_2(\mathcal{F}_2(\mathcal{V}))=1/2,$ because $\mathcal{V}\cap (2\mathbb{N}+1)=\{1\}.$ %Is the $p-$adic measure of $\mathcal{F}_p(\mathcal{V})$ equals to $1-1/p$ for $p\ge 3$?
    Suppose that $2$ is a primitive root modulus $p^2,$ which implies that $2$ is a primitive root modulus $p^n$ for every $n$. In this case, given $x\in \mathbb{U}$ and $\varepsilon>0$ take $n$ such that $p^{-n}<\varepsilon$. Let $0<j<p^n$ with $gcd(j,p)=1$ such that $x\in j+p^n\mathbb{Z}_p$. There is $m\in \mathbb{N}$ such that
        $$2^m\equiv j \ (\mbox{mod} \ p^n).$$
    In particular
        $$|x-\phi(2^{m+1})|_p\le |p^n|=p^{-n}<\varepsilon.$$
    This and the fact that $\mathcal{F}_p(\phi(\mathbb{P}))\supset p\mathbb{Z}_p$ implies that $\mathcal{F}_p(\mathcal{V})=\mathbb{Z}_p$.
    In particular, this happens for $p=3,5,7,11,13$. This method does not allow us to conclude the same for $p=17$ or $p=31$, for instance, because $2^8=256\equiv 1 \ (\mbox{mod} \ 17) $ and $2^5=32\equiv 1 \ (\mbox{mod} \ 31)$ which means that $2$ is not a primitive root modulus $p$.  
    \end{example}

\begin{example}
    Let $q\equiv 1 \ ( \mbox{mod.} \  p )$ be a prime. It is well known that the map $\varphi_q: \mathbb{Z}_p\rightarrow 1+p\mathbb{Z}_p$ given by $\varphi_q(z)=q^z$ is a homeomorphism, see for instance \cite{A}. This implies that $\mathcal{F}_p(\varphi_q(\mathbb{N}))=\varphi_q(\mathcal{F}_p(\mathbb{N}))=\varphi_q(\mathbb{Z}_p)=1+p\mathbb{Z}_p$. Note that we have
        $$HD(\mathcal{F}_q(\varphi_q(\mathbb{N})))=0$$ 
        but $$\mu_p(\mathcal{F}_p(\varphi_q(\mathbb{N})))=1/p, \ \mbox{which implies} \ HD(\mathcal{F}_p(\varphi_q(\mathbb{N}))=1.$$
    \end{example}
    The above example shows that the closed embedding may have a behaviour completely different for distinct values of primes $p\neq q$.

   \begin{example}
   If $E=B_{p^{-n}}(x)$ with $x\in \mathbb{Z}$ then $\Pi_p(E)$ is an arithmetic progression with common difference $p^n$ and containing $x$.
   \end{example}

   \subsection{Sets of integers which are projection of p-adic closed sets}\label{subsection nmap}
Remember the classes 

$$\mathcal{S}_p=\{E\subset \mathbb{Z}; E=\Pi_p(\mathcal{F}_p(E))=\mathcal{F}_p(E)\cap \mathbb{Z}\}$$
          and
            $$\mathcal{S}^{\ast}_p=\{E\subset \mathbb{N};E=\Pi_p(\mathcal{F}_p(E))=\mathcal{F}_p(E)\cap \mathbb{N}\}.$$
           First of all we relax the definition of these classes.
\begin{proposition}
    For any prime number $p$ we have $$\mathcal{S}_p=\{E\subset \mathbb{Z}; E=\Pi_p(F)=F\cap \mathbb{Z}, \ F \ \mbox{closed}\}$$ and $$\mathcal{S}^{\ast}_p=\{E\subset \mathbb{N}; E=\Pi_p(F)=F\cap \mathbb{N}, \ F \ \mbox{closed}\}.$$
\end{proposition}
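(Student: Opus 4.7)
The plan is to prove the two set equalities by double inclusion; I focus on $\mathcal{S}_p$, since the argument for $\mathcal{S}_p^{\ast}$ is identical after replacing $\mathbb{Z}$ by $\mathbb{N}$. Denote the relaxed class on the right-hand side by $\mathcal{T}_p$.

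For the inclusion $\mathcal{S}_p \subseteq \mathcal{T}_p$, given $E \in \mathcal{S}_p$ the set $F := \mathcal{F}_p(E) = \overline{E}^p$ is by definition closed in $\mathbb{Z}_p$, and the defining identity for $\mathcal{S}_p$ reads precisely $E = \Pi_p(F) = F \cap \mathbb{Z}$. Hence $E \in \mathcal{T}_p$. This direction is essentially by definition.

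For the converse $\mathcal{T}_p \subseteq \mathcal{S}_p$, suppose $E = \Pi_p(F) = F \cap \mathbb{Z}$ for some closed set $F \subset \mathbb{Z}_p$. The inclusion $E \subset \mathcal{F}_p(E) \cap \mathbb{Z}$ is immediate since $E \subset \mathcal{F}_p(E)$ and $E \subset \mathbb{Z}$. For the opposite inclusion, I invoke the minimality of the $p$-adic closure: $E \subset F$ and $F$ is closed in the $p$-adic topology, so $\mathcal{F}_p(E) = \overline{E}^p \subset \overline{F}^p = F$. Intersecting with $\mathbb{Z}$ yields
\[
\mathcal{F}_p(E) \cap \mathbb{Z} \;\subset\; F \cap \mathbb{Z} \;=\; E,
\]
which together with the previous inclusion gives $E = \mathcal{F}_p(E) \cap \mathbb{Z} = \Pi_p(\mathcal{F}_p(E))$, i.e.\ $E \in \mathcal{S}_p$.

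The argument is entirely topological and relies only on the fact that $\mathcal{F}_p(E)$ is the smallest closed set containing $E$; there is no real obstacle. The only point worth checking when transferring to $\mathcal{S}_p^{\ast}$ is that the same minimality reasoning applies verbatim after intersecting with $\mathbb{N}$ instead of $\mathbb{Z}$, which poses no issue since $\mathbb{N} \subset \mathbb{Z} \subset \mathbb{Z}_p$ and the $p$-adic closure is taken inside $\mathbb{Z}_p$ regardless.
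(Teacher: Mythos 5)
Your proof is correct and follows essentially the same route as the paper: the nontrivial inclusion is handled by observing that $E\subset\mathcal{F}_p(E)\cap\mathbb{Z}$ trivially, while minimality of the $p$-adic closure gives $\mathcal{F}_p(E)\subset F$ and hence $\mathcal{F}_p(E)\cap\mathbb{Z}\subset F\cap\mathbb{Z}=E$. The paper simply notes that the easy inclusion is immediate rather than writing it out, but the substance is identical.
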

           
\begin{proof}
    It is enough to prove that
    $$\{E\subset \mathbb{Z}; E=\Pi_p(F)=F\cap \mathbb{Z}, \ F \ \mbox{closed}\} \subset \mathcal{S}_p.$$
    Let $E\subset \mathbb{Z}$ such that there is a closed set $F\subset \mathbb{Z}_p$ with $E=\Pi_p(F).$ Since $E\subset \mathcal{F}_p(E)$ we have $E\subset \mathcal{F}_p(E)\cap \mathbb{Z}$. On the other side, if $E=F\cap \mathbb{Z}$ then $E\subset F$ and then $\mathcal{F}_p(E)\subset F$. This implies that $\mathcal{F}_p(E)\cap \mathbb{Z}\subset F\cap \mathbb{Z}=E$, that is, if $E=\Pi_p(F)$ for some $F$ closed then we can take $F=\mathcal{F}_p(E).$
\end{proof}

We write $E^{\ast}=E'\setminus E$ where $E'$ is the set of accumulation points of $E$. Since $\mathcal{F}_p(E)=E\cup E^{\ast}$, $E\in \mathcal{S}_p$ if and only if $E^{\ast}\cap \mathbb{Z}=\emptyset$, and $E\in \mathcal{S}^{\ast}_p$ if and only if $E^{\ast}\cap \mathbb{N}=\emptyset$.

\begin{proposition}\label{ch.1}
Consider $E\subset \mathbb{Z}$ an infinite arithmetic progression with common difference $r$. Then $E=\Pi_p(\mathcal{F}_p(E))$ if and only if $r=p^k$ for some $k$. 
\end{proposition}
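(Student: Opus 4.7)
The plan is to reduce $E=\Pi_p(\mathcal{F}_p(E))$ to a single computation of the $p$-adic closure of $r\mathbb{Z}$. Writing $E=a+r\mathbb{Z}$ for some $a\in\mathbb{Z}$, translation by $a$ is a $p$-adic isometry of $\mathbb{Z}_p$, so $\mathcal{F}_p(E)=a+\overline{r\mathbb{Z}}^{\,p}$. Factoring $r=p^{k}s$ with $k=v_{p}(r)$ and $\gcd(s,p)=1$, the proposition essentially becomes a question about whether $s=1$.

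To compute $\overline{r\mathbb{Z}}^{\,p}$, I would use that $s\in\mathbb{U}$, so multiplication by $s$ is a homeomorphism of $\mathbb{Z}_p$; combined with the standard density of $\mathbb{Z}$ in $\mathbb{Z}_p$, this gives $\overline{s\mathbb{Z}}^{\,p}=\mathbb{Z}_p$. Multiplying through by $p^{k}$ then yields $\overline{r\mathbb{Z}}^{\,p}=p^{k}\mathbb{Z}_p$, so $\mathcal{F}_p(E)=a+p^{k}\mathbb{Z}_p$ is a clopen ball and
$$\Pi_p(\mathcal{F}_p(E))=(a+p^{k}\mathbb{Z}_p)\cap\mathbb{Z}=a+p^{k}\mathbb{Z}.$$

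For the sufficiency direction, if $r=p^{k}$ then $s=1$ and $E=a+p^{k}\mathbb{Z}$ coincides with $\Pi_p(\mathcal{F}_p(E))$ on the nose. For the necessity direction, if $r$ is not a power of $p$, then $s\ge 2$, and the integer $a+p^{k}$ lies in $a+p^{k}\mathbb{Z}=\Pi_p(\mathcal{F}_p(E))$ but not in $E=a+p^{k}s\mathbb{Z}$ (since $1/s\notin\mathbb{Z}$), which breaks the equality with a single explicit witness.

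I expect the only substantive step to be the closure computation; both implications become immediate once $\mathcal{F}_p(E)$ is identified with the ball $a+p^{k}\mathbb{Z}_p$. The one pitfall to watch is the reading of \emph{infinite arithmetic progression}: for a one-sided progression even $r=p^{k}$ would fail, since negative multiples would be forced into $\Pi_p(\mathcal{F}_p(E))$ by the closure. The statement implicitly means the doubly infinite case, which is the natural reading given $E\subset\mathbb{Z}$.
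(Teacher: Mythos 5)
Your proposal is correct and follows essentially the same route as the paper: both arguments reduce to showing $\mathcal{F}_p(E)=a+p^k\mathbb{Z}_p$ where $r=sp^k$ with $\gcd(s,p)=1$, and then observe that $(a+p^k\mathbb{Z}_p)\cap\mathbb{Z}=a+p^k\mathbb{Z}$ equals $E$ exactly when $s=1$. The only difference is cosmetic: the paper verifies the closure by an explicit approximation (choosing integers $n$ with $|n-\tilde z/s|_p$ small, which works because $|s|_p=1$), whereas you package the same fact as ``multiplication by the unit $s$ and translation by $a$ are homeomorphisms, and $\mathbb{Z}$ is dense in $\mathbb{Z}_p$.''
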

\begin{proof}
Consider $E=\{a+rn; n\in \mathbb{Z}\}$. We will write $r=sp^k$ with $\mbox{gcd}(s,p)=1$. Take $z\in a+p^k\mathbb{Z}_p=B_{p^{-k}}(a)$. Write $z=a+p^k\tilde{z}$. Since $|s|_p=1$ we have $\dfrac{\tilde{z}}{s}\in \mathbb{Z}_p$. Given $m\in \mathbb{N}$ we choose $n\in \mathbb{N}$ such that
    $$\left | n - \dfrac{\tilde{z}}{s}\right |_p< p^{-m}.$$
    Take $N=a+sp^kn\in E$. Note that $|a+sp^kn-z|_p=|sp^kn-p^k\tilde{z}|_p$. In particular
    $$|a+sp^kn-z|_p=|s|_p|p^k|_p\left |n-\dfrac{\tilde{z}}{s}\right |_p<p^{-m}.$$ 
    This shows that $\mathcal{F}_p(E)=B_{p^{-k}}(a)$. Then
    $$\Pi_p(\mathcal{F}_p(E))=B_{p^{-k}}(a)\cap \mathbb{Z}=\{a+p^kn; n\in \mathbb{Z}\}=E \iff s=1.$$
\end{proof}

We remember that a $p$-adic number $z\in \mathbb{Q}_p$ is in $\mathbb{Q}$ if its $p$-adic expansion is eventually periodic, that is, if $z=\sum_{i=-M}^{\infty}a_ip^i$ then there exists $k\in \mathbb{Z}$ such that $a_{k+N}=a_N$ for $N$ large enough. This implies the following remark which we write as a lemma.

\begin{lemma}\label{caracterização}
A $p$-adic integer $z=\sum_{i\ge 0}a_ip^i$ is in $\mathbb{Z}$ if and only if there is $N\in \mathbb{N}$ such that either $a_i=0, \ i\ge N$ or $a_i=p-1, \ i\ge N$.
\end{lemma}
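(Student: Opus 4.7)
The plan is to use the uniqueness of the $p$-adic expansion together with the single key identity
\[
\sum_{i\ge 0}(p-1)p^i=-1\quad\text{in }\mathbb{Z}_p,
\]
which follows from $(1-p)\sum_{i\ge 0}p^i=1$ (the series converges in $\mathbb{Z}_p$ since $|p|_p<1$). From this one also gets, for every $N\ge 0$,
\[
(p-1)\sum_{i\ge N}p^i=-p^N.
\]

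For the sufficiency direction I would split into the two stated cases. If $a_i=0$ for $i\ge N$, then $z=\sum_{i=0}^{N-1}a_ip^i$ is a finite sum with $a_i\in\{0,\dots,p-1\}$, so $z\in\mathbb{N}_0\subset\mathbb{Z}$. If $a_i=p-1$ for $i\ge N$, then using the identity above,
\[
z=\sum_{i=0}^{N-1}a_ip^i+(p-1)\sum_{i\ge N}p^i=\sum_{i=0}^{N-1}a_ip^i-p^N,
\]
which is a difference of two ordinary integers, hence in $\mathbb{Z}$.

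For the necessity direction, assume $z\in\mathbb{Z}$ and consider two cases. If $z\ge 0$, take its standard base-$p$ representation $z=\sum_{i=0}^{M}b_ip^i$ with $b_i\in\{0,\dots,p-1\}$, extend by $b_i=0$ for $i>M$; by the uniqueness of the $p$-adic digit expansion of an element of $\mathbb{Z}_p$, this must agree with $(a_i)_{i\ge 0}$, so $a_i=0$ for $i>M$. If $z<0$, write $z=-m$ with $m\in\mathbb{N}$ and pick $N\in\mathbb{N}$ with $m\le p^N$. Then
\[
z=-m=(p^N-m)-p^N=(p^N-m)+(p-1)\sum_{i\ge N}p^i,
\]
where $0\le p^N-m<p^N$, so $p^N-m$ admits a base-$p$ expansion $\sum_{i=0}^{N-1}b_ip^i$ with $b_i\in\{0,\dots,p-1\}$. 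Substituting gives a $p$-adic digit expansion of $z$ whose tail is constantly $p-1$; by uniqueness of the expansion this is $(a_i)$, so $a_i=p-1$ for $i\ge N$.

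I do not expect any serious obstacle: the only two ingredients are the geometric-series identity $(p-1)\sum p^i=-1$ and the uniqueness of the $p$-adic digit expansion of a point of $\mathbb{Z}_p$ (already implicit in the second bullet of the preliminaries). The only mild care needed is the borderline case $m=p^N$ in the negative branch, which is handled by choosing $N$ strictly large enough so that the finite-sum expansion of $p^N-m$ makes sense.
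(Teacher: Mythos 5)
Your proof is correct, and the sufficiency direction coincides with the paper's (both reduce the eventually-$(p-1)$ case to the identity $(p-1)\sum_{i\ge N}p^i=-p^N$). The necessity direction, however, takes a genuinely different route. The paper invokes the general fact that an element of $\mathbb{Q}_p$ lies in $\mathbb{Q}$ precisely when its digit expansion is eventually periodic, writes $z=\tilde z+p^N c\,(1-p^k)^{-1}$ for the repeating block $c$, and then argues that this lies in $\mathbb{Z}$ only when $c=0$ or $c=p^k-1$. You instead avoid the periodicity characterization altogether: for $z\ge 0$ you exhibit the terminating base-$p$ expansion, for $z=-m<0$ you exhibit the explicit expansion $z=(p^N-m)+(p-1)\sum_{i\ge N}p^i$ with $0\le p^N-m<p^N$, and in both cases you conclude by uniqueness of the $p$-adic digit expansion. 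Your argument is more self-contained, since it needs only the geometric-series identity and digit uniqueness (both available from the preliminaries), whereas the paper leans on an unproved external characterization of $\mathbb{Q}\cap\mathbb{Z}_p$; the paper's route, on the other hand, places the lemma inside the broader periodicity picture it has just recalled. Your concern about the borderline case $m=p^N$ is harmless but unnecessary: there $p^N-m=0$ and the finite expansion is simply the empty (all-zero) one, which causes no difficulty.
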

\begin{proof}
If $a_i=0$ for $i\ge N$  then $z=\sum_{i=0}^{N-1}a_ip^i\in \mathbb{Z}$. If $a_i=p-1, 
 i\ge N$ then $z=\sum_{i=0}^{N-1}a_ip^i+(p-1)p^N\sum_{j\ge 0}p^j$. Let $\tilde{z}=\sum_{i=0}^{N-1}a_ip^i$. Thus $z=\tilde{z}+(p-1)\dfrac{p^N}{1-p}=\tilde{z}-p^N\in \mathbb{Z}.$ 

 Conversely, suppose that $z=\sum_{i\ge 0}a_ip^i\in \mathbb{Z}$, in particular there is $k$ such that $a_{k+N}=a_N$ for $N$ large enough, we say $N\ge k_0$. Let $(c_0,c_1,...,c_{k-1})=(a_{N},a_{N+1},...,a_{k+N-1})$ for $N>k_0$ and $c=\sum_{i=0}^{k-1}c_ip^i$. We have
    $$z=\sum_{i=0}^{N-1}a_ip^i+\sum_{i\ge N}a_ip^i=\tilde{z}+p^N\cdot c \cdot \dfrac{1}{1-p^{k}}\ \in \mathbb{Z} \iff c\in \{0,p^k-1\}.$$
    Note that $c=0$ if and only in $c_i=0, \ i\in \{0,1,...,k-1\}$ which is equivalent to $a_i=0, \ i\ge N$. On the other hand, $c=p^k-1$ if and only if $c_i=p-1, \ i\in \{0,1,...,k-1\}$ which is equivalent to $a_i=p-1, \ i\ge N$.
\end{proof}

Consider $\gamma_j\in \{0,1,2,...,p-1\}^{r_j}, \ j=1,2,...,s$, $s$-strings with length $r_j$. We write $\mathcal{P}(\mathbb{Z};(\gamma_1,\gamma_2,...,\gamma_s)_p)$ the collection of sets of integers $E$ such that any element $x\in E$ does not exhibit any string $\gamma_j, \ j=1,2,...,s$ in its expansion at the base $p$.

\begin{lemma}\label{não enumerabilidade}
Let $p>3$ a prime number. For $\gamma_1=0$ and $\gamma_2=p-1$ the collection $\mathcal{P}(\mathbb{Z},(\gamma_1,\gamma_2)_p)$ is uncountable.
\end{lemma}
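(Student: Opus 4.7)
The plan is to exhibit an infinite subset $A\subset \mathbb{N}$ whose elements all avoid the digits $0$ and $p-1$ in their base-$p$ expansions, and then to observe that every subset of $A$ automatically belongs to $\mathcal{P}(\mathbb{Z};(\gamma_1,\gamma_2)_p)$. Since an infinite set has uncountably many subsets, the uncountability of the collection will follow at once from $|\mathcal{P}(A)|=2^{\aleph_0}$.

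Concretely, the hypothesis $p>3$ guarantees that the ``safe'' digit alphabet $\{1,2,\ldots,p-2\}$ has at least two elements. I set
$$A=\Big\{\sum_{i=0}^{k}a_i p^i:\ k\in \mathbb{N}_0,\ a_0,\ldots,a_k\in\{1,\ldots,p-2\}\Big\}.$$
By uniqueness of base-$p$ representation, the number of elements of $A$ of base-$p$ length exactly $k+1$ equals $(p-2)^{k+1}\ge 2^{k+1}$, so $A$ is (countably) infinite. By construction every $n\in A$ has a base-$p$ expansion using only digits in $\{1,\ldots,p-2\}$, hence contains neither the length-one string $\gamma_1=0$ nor the length-one string $\gamma_2=p-1$. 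Therefore every $E\subset A$ lies in $\mathcal{P}(\mathbb{Z};(\gamma_1,\gamma_2)_p)$, which yields
$$|\mathcal{P}(\mathbb{Z};(\gamma_1,\gamma_2)_p)|\ \ge\ |\mathcal{P}(A)|\ =\ 2^{\aleph_0}.$$

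The only real subtlety is a choice of convention: the ``expansion at the base $p$'' of a positive integer $n$ has to be interpreted as the finite representation $n=\sum_{i=0}^{k}a_i p^i$ with $a_k\neq 0$, and not as the $p$-adic expansion padded by infinitely many trailing zeros (which in view of Lemma \ref{caracterização} would vacuously contain the digit $0$ for every $n\in\mathbb{N}$ and collapse the claim). Once this convention is fixed the argument presents no obstacle, and the hypothesis $p>3$ plays only the minimal role of ensuring that the safe digit alphabet $\{1,\ldots,p-2\}$ is nonempty (in fact has at least two symbols), so that $A$ can be made infinite.
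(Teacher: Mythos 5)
Your proof is correct, and it rests on the same basic fact as the paper's — that for $p>3$ the ``safe'' digit alphabet $\{1,\dots,p-2\}$ avoiding $0$ and $p-1$ is nonempty — but the mechanism is genuinely different. The paper constructs an explicit injection from the sequence space $\{1,\dots,p-2\}^{\mathbb{N}_0}$ into $\mathcal{P}(\mathbb{Z},(\gamma_1,\gamma_2)_p)$, sending $\alpha=(a_i)_{i\in\mathbb{N}_0}$ to the set of partial sums $x_n=\sum_{i=0}^{n}a_ip^i$, and uses that this sequence space is uncountable precisely because $p-2\ge 2$. You instead note that the single infinite set $A$ of all integers with digits in $\{1,\dots,p-2\}$ has its entire power set contained in the collection, giving $|\mathcal{P}(\mathbb{Z};(\gamma_1,\gamma_2)_p)|\ge 2^{\aleph_0}$ at once. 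Your route is marginally more robust: it only needs $A$ to be infinite, hence only needs the safe alphabet to be nonempty, so it would also cover $p=3$ via the base-$p$ repunits $1,\,1+p,\,1+p+p^2,\dots$, whereas the paper's injection genuinely uses two distinct safe digits. Your remark about the convention for ``expansion at the base $p$'' (the finite representation with nonzero leading digit, not the zero-padded $p$-adic expansion) is the correct reading and is consistent with how the paper itself uses this collection, for instance in deducing $\mathcal{P}(\mathbb{Z},(\gamma_1,\gamma_2)_p)\subset \mathcal{S}_p$.
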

\begin{proof}
    Consider $\alpha=\{1,2,...,p-2\}^{\mathbb{N}_0}$, $\alpha=(a_i)_{i\in \mathbb{N}_0}$. We define $E(\alpha)\subset \mathbb{N}$ by $E(\alpha)=\{x_0,x_1,x_2,...\}$, where $x_n=\sum_{i=0}^na_ip^i$, $n\ge 0$. Note that if $\alpha\neq \beta$ then $E(\alpha)\neq E(\beta).$ In particular the map $E: \{1,2,...,p-2\}^{\mathbb{N}_0}\rightarrow \mathcal{P}(\mathbb{Z},(\gamma_1,\gamma_2)_p)$ is well defined and it is injective. Since $p>3$, $\{1,2,...,p-2\}^{\mathbb{N}_0}$ is an uncountable set  we have that $\mathcal{P}(\mathbb{Z},(\gamma_1,\gamma_2)_p)$ is also an uncountable set.
\end{proof}

\begin{remark}We observe that any set $E$ such that its elements have strings $\alpha=(p-1,...,p-1)$ or $\beta=(0,...,0)$ with length bounded by $M>0$ then there is no element in $E^{\ast}\cap \mathbb{Z}=\emptyset$, that is, $\Pi_p(\mathcal{F}_p(E))=E$. Consider $g:\mathbb{Z}_p\rightarrow \mathbb{Z}_p$ given by $g(x)=\dfrac{x-a_0(x)}{p}$, where $x=\sum_{i\ge 0}a_i(x)p^i$. Since $(g,\mu_p)$ is an ergodic system using our Theorem A, every set in this class has Banach density zero. However, we can easily obtain sets with positive Banach density such that $\Pi_p(\mathcal{F}_p(E))=E$ by Proposition \ref{ch.1}. Moreover, we have $d^{\ast}(E)=\mu_p(\mathcal{F}_p(E))$ because for every set with these properties mentioned above is such that $\mu_p(\partial(\mathcal{F}_p(E)))=0$
\end{remark}
Next we establish a consequence of Lemma \ref{caracterização} and Lemma \ref{não enumerabilidade}.
\begin{proposition}
Consider $p>3$. The collection $\mathcal{S}_p$ is uncountable.
\end{proposition}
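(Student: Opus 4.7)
The plan is to produce an explicit injection from the uncountable set $\{1,2,\ldots,p-2\}^{\mathbb{N}_0}$ into $\mathcal{S}_p$, reusing the construction $E(\alpha)$ from the proof of Lemma \ref{não enumerabilidade} together with the equivalent formulation $E\in\mathcal{S}_p \iff E^{\ast}\cap\mathbb{Z}=\emptyset$ noted just before Proposition \ref{ch.1}.

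First, for each sequence $\alpha=(a_i)_{i\in\mathbb{N}_0}\in\{1,2,\ldots,p-2\}^{\mathbb{N}_0}$ set $x_n=\sum_{i=0}^{n}a_i p^i$ and $E(\alpha)=\{x_n:n\ge 0\}\subset\mathbb{N}$. Since each $a_{n+1}\ne 0$, the $x_n$ are pairwise distinct, so $E(\alpha)$ is an infinite subset of $\mathbb{N}$. As shown in the proof of Lemma \ref{não enumerabilidade} the assignment $\alpha\mapsto E(\alpha)$ is injective, and because $p>3$ the alphabet $\{1,\ldots,p-2\}$ has at least three letters, so $\{1,\ldots,p-2\}^{\mathbb{N}_0}$ is uncountable.

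Next I compute the $p$-adic accumulation points of $E(\alpha)$. From $a_{n+1}\ne 0$ one obtains $|x_n-z|_p=p^{-(n+1)}$, where $z=\sum_{i\ge 0}a_i p^i\in\mathbb{Z}_p$, so $(x_n)$ is a Cauchy sequence converging to $z$ in $\mathbb{Z}_p$. A convergent sequence of pairwise distinct points in a Hausdorff space has exactly one accumulation point, namely its limit; therefore $E(\alpha)'=\{z\}$ and $E(\alpha)^{\ast}=\{z\}\setminus E(\alpha)$. Applying Lemma \ref{caracterização}, since every digit of $z$ lies in $\{1,\ldots,p-2\}$ its expansion is neither eventually $0$ nor eventually $p-1$, hence $z\notin\mathbb{Z}$. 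In particular $z\notin E(\alpha)$, which gives $E(\alpha)^{\ast}=\{z\}$ and $E(\alpha)^{\ast}\cap\mathbb{Z}=\emptyset$, i.e.\ $E(\alpha)\in\mathcal{S}_p$. This embeds an uncountable set into $\mathcal{S}_p$ and finishes the proof.

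The only step that requires any thought is the accumulation-point computation, and that is immediate from the ultrametric behaviour of $\mathbb{Z}_p$; the rest is a direct concatenation of Lemmas \ref{caracterização} and \ref{não enumerabilidade}.
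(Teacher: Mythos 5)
Your proof is correct and follows essentially the same route as the paper: both rely on the injection $\alpha\mapsto E(\alpha)$ from Lemma \ref{não enumerabilidade} and on Lemma \ref{caracterização} to rule out integer accumulation points. The only (harmless) difference is that you verify $E(\alpha)\in\mathcal{S}_p$ directly by computing $E(\alpha)'=\{z\}$ with $z=\sum_{i\ge 0}a_ip^i\notin\mathbb{Z}$, whereas the paper invokes the containment $\mathcal{P}(\mathbb{Z},(0,p-1)_p)\subset\mathcal{S}_p$ for the whole class; your version actually spells out a step the paper leaves implicit.
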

\begin{proof}
By Lemma \ref{caracterização} we have that $\mathcal{P}(\mathbb{Z},(\gamma_1,\gamma_2)_p)\subset \mathcal{S}_p$ and by Lemma \ref{não enumerabilidade} we have that $\mathcal{P}(\mathbb{Z},(\gamma_1,\gamma_2)_p)$ is uncountable.
\end{proof}

Let $\Sigma^{k}=\{0,1,2,...,p-1\}^{-k}=\{(a_k,a_{k-1},...,a_1,a_0); a_i\in\{0,1,2,...,p-1\}\}$, $\Sigma=\cup_{k\ge 0} \Sigma^{k}$ the disjoint union of $\Sigma^{k}$. On the $\Sigma$ we define the shift map
$$\sigma((a_k,a_{k-1},a_{k-2},...,a_2,a_1,a_0))=(a_{k-1},a_{k-2},...,a_2,a_1,a_0), k>0$$  and
$\sigma(a)=0, \ a\in\{0,1,...,p-1\}.$
Note that the map $\sigma$ can be seen as a map on the natural numbers, since every $n\in \mathbb{N}$ has a $p$-adic finite expansion. If $n=(a_k,a_{k-1},...,a_1,a_0)_p=\sum_{i=0}^ka_ip^i$ and $k>0$ then $\sigma(n)=n-a_kp^k=(a_{k-1},...,a_0)_p$ where $a_{k-1}=0$ is allowed.

%$\Sigma^{\infty}=\{0,1,2,...,p-1\}^{\mathbb{-N}_0}$
%$\tilde{\Sigma}=\cup_{n\ge 0} \Sigma^{n}$ and $\Sigma=\Sigma^{\infty}\cup \tilde{\Sigma} $ the disjoint union of $\Sigma^{n}$. 

\begin{lemma}\label{inv1}
    Let $E\subset \mathbb{N}$ be a set $\sigma$-invariant, that is, $\sigma(E)\subset E$. Then $E=\Pi_p(\mathcal{F}_p(E))=\mathcal{F}_p(E)\cap \mathbb{N}.$
\end{lemma}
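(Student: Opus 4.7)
The plan is to prove the nontrivial inclusion $\mathcal{F}_p(E)\cap \mathbb{N}\subset E$, since the reverse inclusion is immediate from $E\subset \mathcal{F}_p(E)$. Fix $m\in \mathcal{F}_p(E)\cap \mathbb{N}$; if $m\in E$ there is nothing to prove, so suppose $m\notin E$. Then $m$ is a $p$-adic accumulation point of $E$, so for any prescribed precision there exists $n\in E\setminus\{m\}$ with $|n-m|_p$ as small as we wish. Writing the canonical base-$p$ expansion $m=b_0+b_1p+\cdots +b_Np^N$ (with $b_N\neq 0$, since $m\geq 1$), pick such an $n$ satisfying $|n-m|_p\le p^{-(N+1)}$; this forces $n-m=p^{k'}c$ for some integer $k'>N$ and some integer $c\geq 1$.

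The strategy is to apply $\sigma$ repeatedly to $n$ until the ``$c$-part'' is exhausted, at which point we will have arrived at $m$. Because $k'>N$, the base-$p$ expansion of $n$ consists of the digits of $m$ at positions $0,\ldots,N$, zeros at positions $N+1,\ldots,k'-1$, and the digits of $c$ starting at position $k'$. In particular the leading nonzero digit of $n$ lies strictly above the top of the $m$-part, so stripping it yields $\sigma(n)=m+p^{k'}\sigma(c)$. By induction, as long as $\sigma^{j-1}(c)\neq 0$ the identity
\[\sigma^{j}(n)=m+p^{k'}\sigma^{j}(c)\]
persists, because the leading nonzero digit of $\sigma^{j-1}(n)$ continues to live in the $c$-part throughout the iteration. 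Letting $j_0$ be the least $j$ with $\sigma^{j}(c)=0$, this gives $\sigma^{j_0}(n)=m$. The hypothesis $\sigma(E)\subset E$, applied $j_0$ times, now places $m=\sigma^{j_0}(n)$ inside $E$, contradicting $m\notin E$.

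The only delicate point is to make sure that during the iteration the two pieces of the expansion never interact; this is exactly what the choice $k'>N$ buys, since it keeps the leading nonzero digit of every $\sigma^{j}(n)$ confined to positions $\ge k'$ until the $c$-part is fully consumed. Beyond this bookkeeping on finite base-$p$ expansions, I do not anticipate any further technical obstacle.
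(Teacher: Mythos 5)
Your proof is correct and follows essentially the same route as the paper's: an integer $m$ in the $p$-adic closure is approximated by elements of $E$ whose base-$p$ expansions consist of the digits of $m$, then zeros, then extra high digits, and iterating $\sigma$ strips those extra digits until one lands on $m$, which then lies in $E$ by $\sigma$-invariance. Your write-up merely makes explicit the induction (that the leading nonzero digit stays in the high part throughout) which the paper leaves implicit.
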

\begin{proof}
Note that if $x\in E^{\ast}\cap \mathbb{N}\neq \emptyset$ then the infinite $p$-adic expansion of $x$ is the form $(...,0,0,a_k,a_{k-1},...,a_1,a_0)$ and if $x_n\in E$ goes to $x$ we have $x_n=(\underline{b},0_s,a_k,a_{k-1},...,a_1,a_0)$ for $n$ arbitrarily large, where $\underline{b}\in \Sigma^{m}$ for some $m$ and $0_s=(0,0,...,0)$ with $s$-zeroes. Since $\sigma(E)\subset E$ we have $\sigma^{m+s}(x_n)=x\in E$. 
\end{proof}

\begin{remark}
We note that it is possible $\mathcal{F}_p(E)\cap \mathbb{Z}\supsetneq E$ because if $x\in E^{\ast}\cap \mathbb{Z}$ is negative, its $p$-adic expansion is of the form $(...,p-1,p-1,...,p-1,a_k,a_{k-1},...,a_1,a_0)$ and if $E$ contains elements such as $((p-1)_s,a_k,a_{k-1},...,a_1,a_0)$ for infinitely many values of $s$ with a fixed $k$ then these elements go to $x$, but $x\notin E$. 
\end{remark}

\begin{example}
    Consider $E=\{x_1,x_2,...\}\subset \mathbb{N}$ where $x_j=\sum_{i=0}^{j-1}(p-1)p^{i}, j\ge 1$. Note that $\sigma(E)\subset E$ and $\Pi_p(\mathcal{F}_p(E))=\mathcal{F}_p(E)\cap \mathbb{N}=E$. We have $\mathcal{F}_p(E)\cap \mathbb{Z}=E\cup \{-1\}$
\end{example}
In order to given a sufficient condition on the set $E\subset \mathbb{Z}$ with $E\cap \mathbb{Z}^{-}$ is infinite we give the following definition. 
    \begin{definition}
        The $p$-adic dual of an integer number $n$ is defined in the following way
        \begin{itemize}
        \item $\overleftarrow{n}=n-p^N$ if $a_i=0, \ \forall i\ge N$ and $a_{N-1}\neq 0$;
        \item $\overleftarrow{n}=n+p^N$ if $a_i=p-1, \ \forall i\ge N$ and $a_{N-1}\neq p-1$.
        \end{itemize}
        Moreover, for $n=0$ we define
        \begin{itemize}
        \item $\overleftarrow{0}=-1$.
        \end{itemize}
    \end{definition}
    %[ACHEI QUE CONSEGUIRIA CARACTERIZAR $\mathcal{F}_p(E)\cap \mathbb{Z}$ USANDO ESTA DEFINIÇÃO.]
   % This definition is equivalent to $\overleftarrow{n}=n-p^N, \ \mbox{if} \ p^{N-1}<n\le p^N$ and $\overleftarrow{n}=n+p^N, \ \mbox{if} \ -p^N\le n< -p^{N-1}$, where $n\neq 0$.
    % Let us define the $\overleftarrow{E}_p=\{\overleftarrow{n}; n\in E\}.$
We write $\overleftarrow{E}_p=\{\overleftarrow{n}; n\in E\}$ the $p$-adic dual set of $E$. The proposition below gives a sufficient condition for a set $E\subset \mathbb{Z}$ under which $E\in \mathcal{S}_p.$
     %\begin{lemma}\label{p-adic dual}
     %   Suppose that $E\subset \mathbb{Z}$ such that $E\cap \mathbb{N}$ is $\sigma$-invariant. If $x_n\in E$ is such that $\lim_n x_n =x\in E^{\ast}\cap \mathbb{Z}^{-}$ then $x\in \overleftarrow{E}_p$. 
    %\end{lemma}

    \begin{proposition}
        Consider $E\subset \mathbb{Z}$. If $E\cap \mathbb{N}$ is $\sigma$-invariant and $\overleftarrow{E}_p \subset E$ then $\Pi_p(\mathcal{F}_p(E))=\mathcal{F}_p(E)\cap \mathbb{Z}=E.$ 
    \end{proposition}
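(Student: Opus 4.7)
The inclusion $E\subset\Pi_p(\mathcal{F}_p(E))$ is immediate; the content is $\mathcal{F}_p(E)\cap\mathbb{Z}\subset E$. My plan is to fix $y\in\mathcal{F}_p(E)\cap\mathbb{Z}$ together with a sequence $(x_n)\subset E$ with $x_n\neq y$ and $x_n\to y$ in $\mathbb{Z}_p$, and to deduce $y\in E$. By Lemma~\ref{caracterização}, $y$ is either a non-negative integer whose $p$-adic digits $(y_i)_i$ are eventually $0$ from some position $N_y$, or a negative integer whose digits are eventually $p-1$ from $N_y$. Passing to a subsequence, I may suppose all $x_n$ lie in $E\cap\mathbb{N}_0$ or all in $E\cap\mathbb{Z}^-$; in the second case I would replace each $x_n$ by $\overleftarrow{x_n}\in E\cap\mathbb{N}_0$ (legitimate because $\overleftarrow{E}_p\subset E$) and check that convergence to $y$ is preserved, since the starting position $N_n$ of the $(p-1)$-tail of $x_n$ satisfies $N_n\ge k_n$, where $k_n$ is the first index at which $x_n$ and $y$ disagree, hence $p^{N_n}\to 0$ in $\mathbb{Z}_p$.

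With $x_n\in E\cap\mathbb{N}_0$ and $x_n\to y$ secured, the next step is to iterate $\sigma$ to peel off the leading digits of $x_n$. For $n$ large enough that $k_n\ge N_y$, let $L_n$ denote the number of nonzero digits of $x_n$ at positions $\ge k_n$; a direct inspection of the $p$-adic expansion shows
\[
\sigma^{L_n}(x_n)\;=\;\sum_{i=0}^{k_n-1} y_i\,p^i,
\]
which lies in $E$ by $\sigma$-invariance of $E\cap\mathbb{N}$. If $y\in\mathbb{N}_0$, this sum equals $y$ (since $y_i=0$ for $i\ge N_y$), and we are done. If $y<0$, the digits $y_{N_y},\dots,y_{k_n-1}$ are all $p-1$, so the right-hand side equals $z_n:=\overleftarrow{y}+p^{k_n}-p^{N_y}$, whose expansion consists of the digits of $\overleftarrow{y}$ on $[0,N_y-1]$, a block of $p-1$'s on $[N_y,k_n-1]$, and zeros on $[k_n,\infty)$. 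A short computation gives $\overleftarrow{z_n}=z_n-p^{k_n}=\overleftarrow{y}-p^{N_y}=y$, and hence $y=\overleftarrow{z_n}\in\overleftarrow{E}_p\subset E$.

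The principal technical obstacle will be the bookkeeping of $p$-adic digits throughout: verifying that the prescribed $\sigma$-iterate lands exactly on the intended integer without prematurely collapsing to $0$, and checking that $\overleftarrow{z_n}$ genuinely equals $y$ and not some nearby integer such as $-1$. The potentially delicate case is $y=-p^k$, where $\overleftarrow{y}=0$ and the involution $\overleftarrow{\overleftarrow{y}}=y$ is not available; however, the identity $z_n=p^{k_n}-p^k$ with $\overleftarrow{z_n}=-p^k=y$ still holds by the same formula, sidestepping the issue.
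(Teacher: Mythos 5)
Your proposal is correct and follows essentially the same route as the paper's proof: split according to whether the limit point and the approximating sequence lie in $\mathbb{N}_0$ or $\mathbb{Z}^-$, replace negative approximants $x_n$ by $\overleftarrow{x_n}\in E$ (checking that convergence is preserved because the $(p-1)$-tails start at positions tending to infinity), use $\sigma$-invariance to peel off the leading digits, and recognize a negative limit as the $p$-adic dual of an element of $E$. The only difference is cosmetic — you stop the $\sigma$-iteration at position $k_n$ and compute $\overleftarrow{z_n}=y$ explicitly, whereas the paper peels down to the block $(a_k,\dots,a_0)$ with $a_k=p-1$ — and your explicit treatment of the case $y=-p^k$ is a welcome extra check.
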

    
\begin{proof}
    In fact, consider $x=(...,p-1,...,p-1,a_k,a_{k-1},...,a_1,a_0)\in E^{\ast}\cap \mathbb{Z}^{-}.$ First we suppose $x_n\in E\cap \mathbb{N}$ with $x_n\to x$.  Then, we can write  $x_n=(\underline{b}_n,p-1,...,p-1,a_k,a_{k-1},...,a_1,a_0)\in \mathbb{N}$ where the string $(p-1,...,p-1)$ has at least $n$ numbers $p-1$ and $a_k=p-1$. In particular, since $E\cap \mathbb{N}$ is $\sigma$-invariant $z=(a_k,a_{k-1},...,a_1,a_0)\in E$. But $x=a_0+a_1p+...+a_{k-1}p^{k-1}+a_kp^k-p^{k+1}=\overleftarrow{z}$. That is, $x\in \overleftarrow{E}_p$. Next, we suppose $x_n\in E\cap \mathbb{Z}^{-}$ with $x_n\to x$. The $p$-adic expansion of a negative number has the infinite block $(...,p-1,p-1,...,p-1)$. Then we can write $x_n=y_n-p^{k_n}$ for some $k_n>0$ and $a_{k_n-1}(y)\neq p-1$. Thus, $y_n=\overleftarrow{x_n}\in \mathbb{N}$ and since $\overleftarrow{E}_p \subset E$ we have $y_n\in E$. Note that the sequence $k_n$ is unbounded otherwise $\{x_n\}_n$ would be finite. Then, $y_n\to x$ and by the first part $x\in \overleftarrow{E}_p \subset E$. If we take $x\in E^{\ast}\cap \mathbb{N}$ the statement follows from Lemma \ref{inv1}.
    \end{proof}

    We postpone to the next section the result which establishes a necessary condition for a set $E\subset \mathbb{Z}$, respectively $E\subset \mathbb{N}$ under which $E\in \mathcal{S}_p$, respectively $\mathcal{S}^{\ast}_p$.

\subsection{Local measures on p-adic integers}\label{subs3.4}
\bigskip

%(\'E PRECISO DECIDIR ONDE EXPLICAREMOS QUE $B_{p^{-n}(a)}\cap I$ \'E UMA PROGRESS\~AO ARITM\'ETICA.)

Next we construct local measures of a $p$-adic set $F$ which are indeed local densities with respect to the counting measure as we shall see in the Proposition \ref{prop.3.1}.	More precisely, remember that given $F\subset \mathbb{Z}_p$ and  $n\in \mathbb{N}$, we define
    $$\eta_s(F;x,n)=\limsup_{|I|\to \infty}\dfrac{|F\cap B_{p^{-n}}(x)\cap I|}{|B_{p^{-n}}(x)\cap I|^s}.$$
%\begin{definition}
%The counting local measure of $F\subset \mathbb{Z}_p$ at $a$ is the number
%$$\eta(F,a)=\limsup_{n\to %\infty}\limsup_{|I|\to \infty}\dfrac{|F\cap %B_{p^{-n}}(a)\cap I|}{|B_{p^{-n}}(a)\cap I|}
%$$
%\end{definition}
%We will see that the $\limsup$ on $n$ is in fact a limit. Since $F\cap B_{p^{-n}}(a)\cap I\subset \mathbb{N}$ and $B_{p^{-n}}(a)\cap I\subset \mathbb{N}$ we see that 
%$$\dfrac{|F\cap B_{p^{-n}}(a)\cap I|}{|B_{p^{-n}}(a)\cap I|}=\dfrac{|F\cap B_{p^{-n}}(a)\cap I|}{|I|}\cdot \dfrac{|I|}{|B_{p^{-n}}(a)\cap I|}$$
%Taking the limsup on both sides we have
%$$\limsup_{|I|\to \infty}\dfrac{|F\cap B_{p^{-n}}(a)\cap I|}{|B_{p^{-n}}(a)\cap I|}\le \dfrac{ d^*(F\cap B_{p^{-n}(a)})}{d^*(B_{p^{-n}}(a)\cap \mathbb{N})}.$$
%In particular we have the following proposition

%\begin{proposition}
%$$\eta(F,a)\le \limsup_{n\to \infty} p^n d^*(F\cap B_{p^{-n}}(a)\cap \mathbb{N})$$ 
%\end{proposition}
%Below we write the general case with exponent $s.$

%and
%    $$\nu_s(F;x,n)=\liminf_{|I|\to \infty}\dfrac{|F\cap B_{p^{-n}(x)}\cap I|}{|B_{p^{-n}(x)}\cap I|^s}.$$

\begin{definition}
The $s$-counting local measure of $F\subset \mathbb{Z}_p$ at $x$ is the number
$$\eta_s(F,x)=\limsup_{n\to \infty}\eta_s(F;x,n).
$$
%In a similar way we define the lower $s$-counting local measure of $F\subset \mathbb{Z}_p$ at $x$ as the number
%$$\nu_s(F,x)=\liminf_{n\to \infty}\nu_s(F;x,n),
%$$
%where 
   % $$\nu_s(F;x,n)=\liminf_{|I|\to \infty}\dfrac{|F\cap B_{p^{-n}}(x)\cap I|}{|B_{p^{-n}}(x)\cap I|^s}.$$
\end{definition}

\begin{remark}
The function $\eta_s(F;\cdot,n): \mathbb{Z}_p\rightarrow [0,\infty]$ is locally constant; in fact, if $y\in B_{p^{-n}}(x)$ then $B_{p^{-n}}(y)=B_{p^{-n}}(x),$ which implies $\eta_s(F;x,n)=\eta_s(F;y,n)$. In particular, $\eta_s(F;\cdot)$ is a measurable function with respect to the Borel $\sigma$-algebra of $\mathbb{Z}_p$.
\end{remark}

Remember that given a set $\mathcal{F}\subset \mathcal{P}(\mathbb{Z})$ and $s>0$, we say that $E\subset \mathbb{Z}$ is $s$-abundant in $\mathcal{F}$ if there exists a sequence of finite sets $\{A_j\}_{j\in \mathbb{N}}\subset \mathcal{F}$ and $c>0$ such that $|A_{k+1}|>|A_k|$ and
       $$|E\cap A_k|\ge c\cdot |A_k|^s.$$

Remember also that $\mathcal{A}^{r}_f\subset \mathcal{P}(\mathbb{Z})$ denotes the set of all finite arithmetic progressions of common difference $r$.

In the next proposition we will see that we can obtain number theoretic information from the function $\eta_s$.

\begin{proposition}\label{etas:teorianumeros}
Given $F\in \mathbb{Z}_p$, if there is a point $x\in \mathbb{Z}_p$ such that $\eta_s(F;x)>0$, then for an unbounded sequence $\{n_j\}_j$ of natural numbers we have that $\Pi_p(F)\subset \mathbb{Z}$ is $s$-abundant in $\mathcal{A}^{p^{n_j}}_f$. 
\end{proposition}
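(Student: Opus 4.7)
The plan is to unpack both definitions ($\eta_s$ and $s$-abundance) and observe that the only nontrivial ingredient needed is the identification of $p$-adic balls intersected with $\mathbb{Z}$ as arithmetic progressions.

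Assume $\eta_s(F;x) > 0$ and set $c = \eta_s(F;x)/2 > 0$. Since $\eta_s(F;x) = \limsup_{n\to\infty}\eta_s(F;x,n)$, I would first extract an unbounded sequence $\{n_j\}_j \subset \mathbb{N}$ with $\eta_s(F;x,n_j) > c$ for every $j$. For each such $n_j$, the fact that $\eta_s(F;x,n_j)$ is itself a $\limsup$ over integer intervals with $|I|\to\infty$ produces a sequence of intervals $\{I_k^{(j)}\}_k$ of integers with $|I_k^{(j)}|\to\infty$ such that
\[
\frac{|F\cap B_{p^{-n_j}}(x)\cap I_k^{(j)}|}{|B_{p^{-n_j}}(x)\cap I_k^{(j)}|^s}\;\ge\;\frac{c}{2}.
\]

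Next, I would identify $B_{p^{-n_j}}(x)\cap I_k^{(j)}$ as a finite arithmetic progression of common difference $p^{n_j}$: letting $a_j\in\{0,1,\dots,p^{n_j}-1\}$ be the integer representative of $x\bmod p^{n_j}$, one has $B_{p^{-n_j}}(x)\cap \mathbb{Z}=a_j+p^{n_j}\mathbb{Z}$, so intersecting with the integer interval $I_k^{(j)}$ yields $A_k^{(j)}\in \mathcal{A}^{p^{n_j}}_f$. Because $A_k^{(j)}\subset \mathbb{Z}$ we have $F\cap A_k^{(j)}=\Pi_p(F)\cap A_k^{(j)}$, hence
\[
|\Pi_p(F)\cap A_k^{(j)}|\;\ge\;\frac{c}{2}\,|A_k^{(j)}|^s.
\]

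To finish, I would build the $s$-abundance sequence diagonally: since $|A_k^{(j)}|\to\infty$ as $k\to\infty$ for each fixed $j$, I pick for each $j$ some $k_j$ with $|A_{k_j}^{(j)}|>|A_{k_{j-1}}^{(j-1)}|$. Setting $A_j:=A_{k_j}^{(j)}$ gives a sequence of finite arithmetic progressions with strictly increasing cardinalities, with $A_j\in\mathcal{A}^{p^{n_j}}_f$, and satisfying $|\Pi_p(F)\cap A_j|\ge (c/2)|A_j|^s$. This is exactly the required $s$-abundance (with constant $c/2$) relative to the unbounded sequence $\{n_j\}_j$. The only conceptual step is the bijection between residue classes mod $p^{n}$ and balls of radius $p^{-n}$ in $\mathbb{Z}_p$; apart from that, the proof is essentially a translation exercise, and I do not anticipate a genuine obstacle.
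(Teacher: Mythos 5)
Your proposal is correct and follows essentially the same route as the paper: unwind the two iterated limsups to get an unbounded sequence $\{n_j\}$ and, for each $j$, intervals $I$ with the ratio bounded below, then identify $B_{p^{-n_j}}(x)\cap I$ as a finite arithmetic progression of common difference $p^{n_j}$. The only cosmetic difference is your final diagonal selection; the paper simply keeps, for each fixed $j$, the whole sequence $\{A^j_m\}_m\subset \mathcal{A}^{p^{n_j}}_f$ as the witness of $s$-abundance, which your construction already contains.
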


\begin{proof}
Take $x\in F$ such that $\eta_s(F;x)=\delta>0.$ In particular there exists an unbounded sequence $\{n_j\}_j$ such that 
$$\limsup_{|I|\to +\infty}\dfrac{|F\cap B_{p^{-n_j}}(x)\cap I|}{|B_{p^{-n_j}}(x)\cap I|^s}>\dfrac{\delta}{2}.$$
For any $j$ we can take $\{I^j_m\}_m$ sequence of intervals with $|I^j_{m+1}|>|I^j_m|$ and 
$$|F\cap B_{p^{-n_j}}(x)\cap I^j_m|>\dfrac{\delta}{4}|B_{p^{-n_j}}(x)\cap I^j_m|^s.$$
The sets $A^j_m=B_{p^{-n_j}}(x)\cap I^j_m$ are arithmetic progressions of common difference $p^{n_j}$ and
$$|F\cap A^j_m|>\dfrac{\delta}{4}|A^j_m|^s.$$
\end{proof}

We start our results with the following Lemma. Since the proof is just to count the number of elements in the arithmetic progression $B_{p^{-n}}(x)\cap I$ we omit it.
\begin{lemma}\label{lem1}
Consider an interval $I\subset \mathbb{Z}$. Let $a\in \mathbb{Z}$ and $r_n=p^{-n}$, then

	$$|B_{r_n}(a)\cap I|=\left \lfloor |I| \cdot r_n\right \rfloor \ \mbox{or} \ |B_{r_n}(a)\cap I|=\left \lfloor |I| \cdot r_n \right \rfloor + 1.$$
\end{lemma}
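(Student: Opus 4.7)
The plan is a direct counting argument. First I would identify the intersection $B_{r_n}(a)\cap\mathbb{Z}$ explicitly: since $|y-a|_p \le p^{-n}$ for $y\in\mathbb{Z}$ is equivalent to $p^n\mid (y-a)$, we have
\[
B_{r_n}(a)\cap\mathbb{Z}=a+p^n\mathbb{Z}=\{a+kp^n:k\in\mathbb{Z}\}.
\]
So counting $|B_{r_n}(a)\cap I|$ for $I=(M,N]$ reduces to counting integers $k$ with $M<a+kp^n\le N$, i.e.\ with $(M-a)/p^n<k\le (N-a)/p^n$.

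The second step is to express this count via floor functions. Writing $x=(N-a)/p^n$ and $y=(M-a)/p^n$, standard counting of integers in a half-open interval gives
\[
|B_{r_n}(a)\cap I|=\lfloor x\rfloor-\lfloor y\rfloor,
\]
and the key observation is that $x-y=(N-M)/p^n=|I|\cdot r_n$. Using the identity $\lfloor x\rfloor=\lfloor y\rfloor+\lfloor\{y\}+(x-y)\rfloor$ (where $\{y\}\in[0,1)$ is the fractional part), I obtain
\[
|B_{r_n}(a)\cap I|=\bigl\lfloor\{y\}+|I|\cdot r_n\bigr\rfloor.
\]

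The third step closes the argument. Since $\{y\}\in[0,1)$, the quantity $\{y\}+|I|\cdot r_n$ lies in the interval $[|I|\cdot r_n,\,|I|\cdot r_n+1)$, so its floor is either $\lfloor |I|\cdot r_n\rfloor$ or $\lfloor |I|\cdot r_n\rfloor+1$. This is exactly the stated conclusion, and both values can indeed occur depending on the alignment of $a$ modulo $p^n$ with the endpoints of $I$.

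There is no real obstacle here — the only subtlety is handling the two possible values of $\lfloor x\rfloor-\lfloor y\rfloor$ when $x-y$ is fixed, which is resolved cleanly by the fractional-part identity. The authors' decision to omit the proof is reasonable since the computation is this short.
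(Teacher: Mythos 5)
Your argument is correct and is exactly the counting-of-an-arithmetic-progression computation that the authors allude to when they omit the proof: identifying $B_{r_n}(a)\cap\mathbb{Z}=a+p^n\mathbb{Z}$, counting lattice points in a half-open interval via $\lfloor x\rfloor-\lfloor y\rfloor$, and bounding this by the fractional-part identity. No gaps; the two possible values are handled cleanly.
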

%\begin{proof}
%Note that if we write 
%$$r=\min\{\tilde{r}\in\mathbb{N}; a+\tilde{r}p^n>M\} \ \mbox{and} \ s=\max\{\tilde{s}\in \mathbb{N}; a+sp^n\le N\}$$
%then $B_{r_n}(a)\cap I=\{a+rp^n,...,a+sp^n\}$ and in particular, 
%    $$|B_{r_n}(a)\cap I|=s-r+1.$$
%However,
%    $$(M-a)r_n<r\le (M-a)r_n+1 \ \ \mbox{and} \ \ (N-a)r_n-1 < s\le (N-a)r_n.$$
%From the above inequalities we have
%    $$(N-M)r_n-1 < s-r+1< (N-M)r_n+1,$$
%that is,
%    $$s-r+1=\left \lfloor |I| \cdot r_n\right \rfloor \ \mbox{or} \ s-r+1=\left \lfloor |I| \cdot r_n \right \rfloor + 1$$

%\end{proof}

%\begin{remark}
%One clearly has $r=\left \lfloor\dfrac{M-a}{p^n}\right \rfloor+1$ and $s=\left %\lfloor\dfrac{N-a}{p^n}\right \rfloor$
%\end{remark}

Let us give an example

\begin{example}
 In $\mathbb{Z}_2$ we have
    $$B_{2^{-n}}(1)\cap (M,N]=\left \{1+\left(\left \lfloor\dfrac{M-1}{2^n}\right \rfloor+1\right)2^n,...,1+\left \lfloor\dfrac{N-1}{2^n}\right \rfloor 2^n\right \}.$$
In particular, if $M=2^{nj}+1$ and $N={2^{nj+1}}$ then
    $$B_{2^{-n}}(1)\cap (M,N]= \{1+2^{nj}+2^n, ...,1+2^{nj+1}-2^n \}.$$
    If $E=\{1+2m; m\in \mathbb{N}\}$ then 
    $\overline{E}^{2}\cap B_{2^{-n}}(1)\cap (M,N]=B_{2^{-n}}(1)\cap (M,N]$.
    Therefore, for $s=1$
    $$\eta_s(\bar{E}^2;1,n)=\limsup_{|I|\to \infty}\dfrac{|\overline{E}^{2}\cap B_{2^{-n}}(1)\cap I|}{|B_{2^{-n}}(1)\cap I|}=1$$
    does not depends on $n$.
\end{example}

Let $F\subset \mathbb{Z}_p$, we note that 
$$\dfrac{|F\cap B_{p^{-n}}(x)\cap I|}{|B_{p^{-n}}(x)\cap I|^s}=\dfrac{|F\cap B_{p^{-n}}(x)\cap I|}{|I|^{s}}\cdot \dfrac{|I|^s}{|B_{p^{-n}}(x)\cap I|^s},$$
when $|I|$ goes to infinity we have
$$\eta_s(F;x,n) =\dfrac{\mathcal{H}_s(F\cap B_{p^{-n}}(x)\cap \mathbb{Z})}{d^{\ast}(B_{p^{-n}}(x)\cap \mathbb{Z})^s}= \dfrac{\mathcal{H}_s(F\cap B_{p^{-n}}(x)\cap \mathbb{Z})}{p^{-ns}}.$$
In particular we have the following proposition

\begin{proposition}\label{prop.3.1} We have
   $$\eta_s(F,x)= \limsup_{n\to \infty} \dfrac{ \mathcal{H}_s(F\cap B_{p^{-n}}(x)\cap \mathbb{Z})}{p^{-ns}}$$
%and
   % $$\nu_s(F;x)=\liminf_{n\to \infty}\dfrac{ \underline{\mathcal{H}}_s(F\cap B_{p^{-n}}(x)\cap \mathbb{Z})}{p^{-ns}}.$$
\end{proposition}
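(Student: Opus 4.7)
The proposition falls out almost immediately from the factorization already exhibited in the excerpt, together with Lemma \ref{lem1}. My plan is to first nail down the behavior of $|B_{p^{-n}}(x)\cap I|$ for fixed $n$ as $|I|\to \infty$, then feed this into a standard $\limsup$ product rule, and finally take $\limsup$ in $n$.

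First, I would fix $n$ and $x$, and use Lemma \ref{lem1} to observe that
\[
\left|B_{p^{-n}}(x)\cap I\right| = \lfloor |I|\cdot p^{-n}\rfloor \quad \text{or}\quad \lfloor |I|\cdot p^{-n}\rfloor + 1,
\]
so in particular
\[
\lim_{|I|\to\infty} \frac{|I|}{|B_{p^{-n}}(x)\cap I|} = p^{n},
\]
and hence the factor $\bigl(|I|/|B_{p^{-n}}(x)\cap I|\bigr)^{s}$ converges to the positive constant $p^{ns}$ as $|I|\to\infty$.

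Next, I would apply the factorization given in the excerpt,
\[
\frac{|F\cap B_{p^{-n}}(x)\cap I|}{|B_{p^{-n}}(x)\cap I|^{s}}
= \frac{|F\cap B_{p^{-n}}(x)\cap I|}{|I|^{s}} \cdot \left(\frac{|I|}{|B_{p^{-n}}(x)\cap I|}\right)^{s}.
\]
Because the second factor has a genuine limit $p^{ns} > 0$, the standard rule $\limsup(a_{k}b_{k}) = (\lim b_{k})\cdot \limsup a_{k}$ applies when $\lim b_k$ exists and is positive. Taking $\limsup_{|I|\to\infty}$ of both sides and recognizing the first factor as the defining quantity for $\mathcal{H}_{s}\bigl(F\cap B_{p^{-n}}(x)\cap \mathbb{Z}\bigr)$, I obtain
\[
\eta_{s}(F;x,n) = p^{ns}\cdot \mathcal{H}_{s}\bigl(F\cap B_{p^{-n}}(x)\cap \mathbb{Z}\bigr) = \frac{\mathcal{H}_{s}\bigl(F\cap B_{p^{-n}}(x)\cap \mathbb{Z}\bigr)}{p^{-ns}}.
\]

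Finally I would take $\limsup_{n\to\infty}$ of both sides, which by the definition of $\eta_{s}(F;x)$ gives the claimed identity. There is no real obstacle here; the only point worth being careful about is that the $\limsup$--product rule requires the converging factor to be positive in the limit, which is guaranteed since $p^{ns} > 0$ for every finite $n$. The finiteness of $n$ is essential because $p^{ns}\to\infty$ as $n\to\infty$, so one cannot interchange the limit in $n$ with the limsup in $|I|$ inside the second factor; the argument really proceeds level-by-level in $n$.
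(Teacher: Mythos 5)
Your argument is correct and is essentially the same as the paper's: the authors use exactly this factorization of $\frac{|F\cap B_{p^{-n}}(x)\cap I|}{|B_{p^{-n}}(x)\cap I|^{s}}$ into the ratio over $|I|^{s}$ times $\bigl(|I|/|B_{p^{-n}}(x)\cap I|\bigr)^{s}$, identify the second factor's limit as $p^{ns}$ via Lemma \ref{lem1}, and then take $\limsup$ in $n$. Your explicit remark about the $\limsup$--product rule requiring a positive finite limit, and about working level-by-level in $n$, only makes the paper's implicit steps precise.
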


\begin{remark}
    The Proposition \ref{prop.3.1} shows that $\eta_s$ is a local density associated to $\mathcal{H}_s$.
\end{remark}
%\begin{proof}
%By Lemma \ref{lem1} it is easy to see that for any $x\in \mathbb{Z} / p^n\mathbb{Z}$ and $I=(M,N]\cap \mathbb{N}$ the set 
%$$B_{p^{-n}}(x)\cap \mathbb{Z}$$
%is an arithmetic progression of rate $p^n$. Therefore $d^{\ast}(B_{p^{-n}}(x)\cap \mathbb{N})=p^{-n}.$
%\end{proof}

\begin{corollary}
 If $t>D(F\cap \mathbb{Z})$ then $\eta_t(F;x)=0$ for every $x\in \mathbb{Z}_p$.
\end{corollary}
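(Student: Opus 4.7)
The proof will be short because it essentially assembles two facts already stated in the paper: the monotonicity of the counting dimension under inclusion, and the representation of $\eta_t$ from Proposition \ref{prop.3.1}.

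First, I would fix an arbitrary $x\in \mathbb{Z}_p$ and $n\in \mathbb{N}$ and observe the inclusion
\[
F\cap B_{p^{-n}}(x)\cap \mathbb{Z}\subset F\cap \mathbb{Z}.
\]
By the monotonicity property $A\subset B\implies D(A)\le D(B)$ listed among the basic properties of $D$, this yields $D(F\cap B_{p^{-n}}(x)\cap \mathbb{Z})\le D(F\cap \mathbb{Z})<t$.

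Next, I would invoke the remark after the definition of $D$, which states that whenever $t>D(E)$ one has $\mathcal{H}_t(E)=0$. Applying this with $E=F\cap B_{p^{-n}}(x)\cap \mathbb{Z}$ gives
\[
\mathcal{H}_t\bigl(F\cap B_{p^{-n}}(x)\cap \mathbb{Z}\bigr)=0
\]
for every $n\in\mathbb{N}$.

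Finally, I would plug this into the formula from Proposition \ref{prop.3.1}, which expresses
\[
\eta_t(F,x)=\limsup_{n\to\infty}\frac{\mathcal{H}_t\bigl(F\cap B_{p^{-n}}(x)\cap \mathbb{Z}\bigr)}{p^{-nt}}.
\]
Since every term in the sequence inside the $\limsup$ is $0$, we conclude $\eta_t(F;x)=0$. There is no real obstacle here; the content lies entirely in Proposition \ref{prop.3.1} and in the dimension–measure threshold for $D$, both of which have already been established in the excerpt.
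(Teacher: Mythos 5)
Your proof is correct and follows essentially the same route as the paper: both reduce the claim to Proposition \ref{prop.3.1} and observe that each numerator $\mathcal{H}_t(F\cap B_{p^{-n}}(x)\cap \mathbb{Z})$ vanishes. The only cosmetic difference is that the paper deduces this vanishing directly from the monotonicity of $\mathcal{H}_t$ under inclusion, while you pass through the monotonicity of $D$ and the dimension--measure threshold; both steps are valid and equivalent here.
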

\begin{proof}
By Proposition \ref{prop.3.1} 
    $$\eta_t(F;x)=\limsup_{n\to \infty}\dfrac{\mathcal{H}_t(F\cap B_{p^{-n}}(x)\cap \mathbb{Z})}{p^{-nt}}=0, \ \forall \ x,$$
since $\mathcal{H}_t(F\cap B_{p^{-n}}(x)\cap \mathbb{Z})\le H_t(F\cap \mathbb{Z})=0$, if $t>D(F\cap \mathbb{Z}).$    
    
\end{proof}

%Fix $x\in F$, where $F$ is closed by simplicity. What can we talk about the map $s\mapsto \eta_s(F;x)$?

%If $\eta_s(F;x)<\infty$ and $t>s$ then
%\begin{equation*}
%    \eta_t(F;x)=\limsup_{n\to \infty}\limsup_{|I|\to \infty}\dfrac{|F\cap B_{p^{-n}}(x)\cap I|}{|B_{p^{-n}}(x)\cap I|^s}\cdot \dfrac{1}{|B_{p^{-n}}(x)\cap I|^{t-s}}.
%\end{equation*}
%By Lemma \ref{lem1} $|B_{p^{-n}}(x)\cap I|\ge \dfrac{|I|}{p^n}$ and thus $\dfrac{1}{|B_{p^{-n}}(x)\cap I|^{t-s}}\le \dfrac{1}{(|I|/p^n)^{t-s}}\to 0,$ when $|I|\to \infty$. This implies $\eta_t(F;x)=0$.
%Is there a relation between the $s$-arithmetic counting dimension and the $s$-counting local measure? 

%Therefore, if $AD(F\cap \mathbb{N})<s$ then $|F\cap B_{p^{-n}}(a)\cap I|=O\left(| B_{p^{-n}}(a)\cap I |^s|\right )$ and the constant does not depends on $n$. This implies that
  %  $$\eta_s(F)=\sup_{a\in \mathbb{Z}_p}\eta_s(F,a)<\infty.$$
    \section{$\mathcal{H}_s$ vs $\mathcal{H}^s$ results}\label{3}

%[SERIA INTERESSANTE DEFINIR EMBEDDING DIMENSION DE E COMO O INFIMO EM $p$ DE $HD(\mathcal{F}_p(E))$?]

In this section we prove our main results. We start by showing that there is a relation between the $p$-adic Haar measure of $F\subset \mathbb{Z}_p$ and the upper Banach density of $\Pi_p(F)\subset \mathbb{Z}$. Then we generalize this result to other dimensions, changing the Haar measure by the $s$-dimensional Hausdorff measure and the upper Banach density by $s$-counting measure. We give two proofs for the theorem below, a combinatorial measure-theoretical proof, using Lemma \ref{lem1} and an ergodic-theoretical proof. Here there is no difference take $\Pi_p(F)=F\cap \mathbb{N}$ or $\Pi_p(F)=F\cap \mathbb{N}$ the proofs can easily adapted.

\begin{mydef}
If $F\subset \mathbb{Z}_p$ is closed, then
	$$ d^{\ast}(\Pi_p(F))\le \mu_p(F).$$
 Moreover, if $\mu_p(\partial(F))=0$ then the equality holds.
	%If $F$ is clopen then
	%$$d(F\cap \mathbb{Z})=d^*(F\cap \mathbb{Z})=\mu_p
	%(F).$$
\end{mydef}
\begin{proof}
Let $r_n=p^{-n}$ and write $G_n=\{a\in \mathbb{Z}/p^n\mathbb{Z}; B_{r_n}(a)\cap F\neq \emptyset\}$. Note that

	$$F=\bigcap_{n\ge 0}\bigcup_{a\in G_n} B_{r_n}(a).$$
	
If we denote $\Delta_n(F)=\bigcup_{a\in G_n} B_{r_n}(a)$, then $\Delta_{n+1}(F)\subset \Delta_{n}(F)$ and one has 
	$$\mu_p(F)=\lim_{n\to \infty} \mu_p(\Delta_n(F)).$$	
	On the other hand, 
		$$\mu_p(\Delta_n(F))=\sum_{a\in G_n} \mu_p(B_{r_n}(a))= |G_n| \cdot r_n. $$
	Since $F\cap I \subset \Delta_n(F)\cap I$ we have 
		$$|F\cap I|\le \sum_{a\in G_n} |B_{r_n}(a)\cap I|\le \sum_{a\in G_n} (r_n |I|+1)=|G_n|\cdot r_n \cdot |I|+|G_n|,$$
	where we used Lemma \ref{lem1}. In particular,
		$$d^*(F\cap \mathbb{Z})\le |G_n| \cdot r_n.$$
	taking $n\to \infty$	we have		
		$$d^*(F\cap \mathbb{Z})\le \mu_p(F).$$
 % [VAMOS TER CUIDADO COM A DEFINI\c C\~AO DE DENSIDADE EM $\mathbb{Z}$ NO USO DO BIRKHOFF ABAIXO].
  
		%If $F$ is clopen, the characteristic function of $F$, $\chi_{F}$, is an uniformly continuous function and using (\ref{eq1}) for $\chi_{F}$ and $Tx=x+1$ we have
% 		$$\mu_p(F)=\lim_{N\to \infty} \dfrac{1}{2N+1}\sum_{j=-N}^{N} \chi_{F}(T^j(0)),$$
	%which is the same as	$$\mu_p(F)=\lim_{N\to \infty} \dfrac{|\{-N\le j\le N; T^j(0)\in F\}|}{2N+1}=d(F\cap \mathbb{Z}).$$
		%Since 
		%$$d^*(F\cap \mathbb{Z})\ge %d(F\cap \mathbb{Z})=\mu_p(F)\ge %d^*(F\cap \mathbb{Z})$$
		%we get the desired result.
  %We note that the same is true if we consider $F\cap \mathbb{N}$ instead of $F\cap \mathbb{Z}$.

We also give another proof. Let $I_n\subset \mathbb{Z}$ be a family of intervals of integers numbers, with $\lim |I_n|=\infty$. Let $\delta_0$ be the Dirac measure supported in $0\in \mathbb{Z}_p$ and consider the family of measures $\displaystyle \nu_n=\dfrac{1}{|I_n|}\sum_{j\in I_n} T_{\ast}^j\delta_0$, where $T(x)=x+1$. Since $T$ is uniquely ergodic, we have in the weak$^*$-topology 
    $$\lim_n \nu_n=\mu_p.$$
    For any closed set $F\subset \mathbb{Z}_p$ one has $\limsup_n \nu_n(F)\le \mu_p(F)$, that is,
    \begin{equation}\label{Portmanteau}
        d^*(F\cap \mathbb{Z})\le \mu_p(F).
    \end{equation}
    Moreover, if $F$ is such that $\mu_p(\partial(F))=0$ then (\ref{portmanteau})  implies equality in (\ref{Portmanteau}). 

    \end{proof}	

  %  \begin{remark}
   % Note that the result is also valid if $E\subset \mathbb{N}$ and $\Pi_p(F)=F\cap \mathbb{N}$.
   % \end{remark}
We are ready to prove the Corollary \ref{cor.1.1}. 

\begin{flushleft}
\textbf{Corollary 1.1.} Consider $E, F\subset \mathbb{Z}$ such that $d^{\ast}(E)+d^{\ast}(F)>1$, then for every $p$ prime and for every $n
 \in \mathbb{N}$ the arithmetic difference $E-F$ contains multiples of $p^n$. 
\end{flushleft}

\begin{proof}
If $E,F\subset \mathbb{Z}$ are such that $d^{\ast}(E)+d^{\ast}(F)>1$, then by Proposition A $$\mu_p(\mathcal{F}_p(E))+\mu_p(\mathcal{F}_p(F))>1,$$ for any prime $p\ge 2.$ In particular, 
$\mu_p(\mathcal{F}_p(E)\cap \mathcal{F}_p(F))>0.$ Take $x\in \mathcal{F}_p(E)\cap \mathcal{F}_p(F)$. For any $k\in \mathbb{N}$, there are $m\in E$ and $n\in F$ such that 
    $$|x-m|_p\le p^{-k} \ \mbox{and} \ |x-n|_p\le p^{-k}.$$
But this implies $|m-n|_p\le p^{-k}$, that is, $m\equiv n \ (\mbox{mod } p^k).$    
\end{proof}

\begin{remark}
The equality $d^{\ast}(E)+d^{\ast}(F)=1$ is not enough to conclude the statement. For instance, take $E=\{2n;n\in \mathbb{N}\}$ and $F=\{2n+1; n\in \mathbb{N}\}$. We have $d^{\ast}(E)+d^{\ast}(F)=1$ but $m-n\equiv 0 \ (\mbox{mod.} \ 2^k)$ never holds for any $k>0$, $m\in E$ and $n\in F$.
\end{remark}
		We remember the definition of upper and lower density of a set $F$.
	\begin{definition}
	    Let $F\subset \mathbb{Z}_p$. The upper $s$-density of $F$ at $x$ are respectively
	    $$\overline{\theta}^s(F;x)=\limsup_{n\to \infty} \dfrac{\mathcal{H}^s(F\cap B_{p^{-n}}(x))}{p^{-ns}},$$
	    %and
	    %$$\underline{\theta}^s(F;x)=\liminf_{n \to \infty} \dfrac{\mathcal{H}^s(F\cap B_{p^{-n}}(x))}{p^{-ns}}.$$
	\end{definition}
It is not difficult to see that if $F$ is closed then $\overline{\theta}^s(F;x)=0$ if $x\notin F$. Now we show that the $s$-upper local density is almost always bounded from below by 1.

\begin{remark}\label{ADset}
    We note that if $F$ is an s-Ahlfors-David regular set then there exists $C>0$ such that
    \begin{equation}
    C^{-1}\le \overline{\theta}^s(F;x)\le C, \ \forall \ x\in F. 
    \end{equation}
\end{remark}

\begin{proposition} \label{prop.4.1}
   Let $F\subset \mathbb{Z}_p$ be a measurable set with $\mathcal{H}^s(F)<+\infty$. For $\mathcal{H}^s$-almost every $x\in F$ we have $\overline{\theta}^s(F;x)= 1.$
\end{proposition}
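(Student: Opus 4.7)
The plan is to prove both inequalities $\overline{\theta}^s(F;x)\le 1$ and $\overline{\theta}^s(F;x)\ge 1$ for $\mathcal{H}^s$-almost every $x\in F$, whose combination gives equality. Both halves will exploit the ultrametric property of $\mathbb{Z}_p$: any two balls are either disjoint or nested, so from any family of balls of uniformly bounded radii one can extract a disjoint subfamily with the same union by retaining only the maximal balls. This yields Vitali-type disjoint covers with no geometric loss, which is what allows equality here (rather than the $2^{-s}$-style bound that occurs in $\mathbb{R}^n$).

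For the upper bound, I will fix $\epsilon>0$ and set $A_\epsilon=\{x\in F:\overline{\theta}^s(F;x)>1+\epsilon\}$. Given any open $V\supset A_\epsilon$ and any $\delta>0$, for each $x\in A_\epsilon$ the definition of $\overline{\theta}^s$ lets me choose a ball $B_{p^{-n_x}}(x)\subset V$ with $p^{-n_x}<\delta$ and $\mathcal{H}^s(F\cap B_{p^{-n_x}}(x))>(1+\epsilon)p^{-n_x s}$. Extracting a disjoint maximal subfamily $\{B_i\}$ that still covers $A_\epsilon$, I obtain
\[
(1+\epsilon)\,\mathcal{H}^s_\delta(A_\epsilon)\le (1+\epsilon)\sum_i r_i^s<\sum_i \mathcal{H}^s(F\cap B_i)\le \mathcal{H}^s(F\cap V).
\]
Letting $\delta\to 0$ and then $V\downarrow A_\epsilon$, using outer regularity of the finite Borel measure obtained by restricting $\mathcal{H}^s$ to $F$, gives $(1+\epsilon)\mathcal{H}^s(A_\epsilon)\le \mathcal{H}^s(A_\epsilon)$, hence $\mathcal{H}^s(A_\epsilon)=0$. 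A countable union over a sequence $\epsilon_k\downarrow 0$ yields $\overline{\theta}^s(F;x)\le 1$ for $\mathcal{H}^s$-almost every $x\in F$.

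For the lower bound, I will fix $\epsilon>0$ and set $B_\epsilon=\{x\in F:\overline{\theta}^s(F;x)<1-\epsilon\}$. Every $x\in B_\epsilon$ satisfies $\mathcal{H}^s(F\cap B_{p^{-n}}(x))<(1-\tfrac{\epsilon}{2})p^{-ns}$ for all $n\ge N(x)$ and some $N(x)$, so $B_\epsilon=\bigcup_N B_{\epsilon,N}$ with $B_{\epsilon,N}=\{x\in B_\epsilon:N(x)\le N\}$. Suppose for contradiction that $\mathcal{H}^s(B_{\epsilon,N})>0$. Given $\eta>0$, I choose $\delta<p^{-N}$ and a $\delta$-cover of $B_{\epsilon,N}$ by balls with $\sum_j r_j^s<\mathcal{H}^s(B_{\epsilon,N})+\eta$, then pass to a disjoint subcover by maximal balls (the sum only decreases). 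For each remaining ball $B_j$ meeting $B_{\epsilon,N}$, I pick $x_j\in B_j\cap B_{\epsilon,N}$; the ultrametric property forces $B_j=B(x_j,r_j)$, and since $r_j<p^{-N}$ the defining inequality for $B_{\epsilon,N}$ gives $\mathcal{H}^s(F\cap B_j)<(1-\tfrac{\epsilon}{2})r_j^s$. Summing over $j$,
\[
\mathcal{H}^s(B_{\epsilon,N})\le \sum_j \mathcal{H}^s(F\cap B_j)<(1-\tfrac{\epsilon}{2})\bigl(\mathcal{H}^s(B_{\epsilon,N})+\eta\bigr),
\]
and letting $\eta\to 0$ gives $\mathcal{H}^s(B_{\epsilon,N})\le (1-\tfrac{\epsilon}{2})\mathcal{H}^s(B_{\epsilon,N})$, a contradiction.

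The main obstacle I expect is the lower-bound half, since one must simultaneously: (i) choose a cover that is near-optimal for $\mathcal{H}^s_\delta$ (an upper bound on $\sum r_j^s$), (ii) disjointify it (so that $\sum \mathcal{H}^s(F\cap B_j)\ge \mathcal{H}^s(B_{\epsilon,N})$ holds), and (iii) recenter each cover ball at a point of $B_{\epsilon,N}$ so the density hypothesis applies. Each of these steps is free in an ultrametric space but fails or weakens in Euclidean space, and this is exactly what gives the sharp constant $1$ rather than $2^{-s}$.
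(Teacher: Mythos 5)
Your argument is correct. For the inequality $\overline{\theta}^s(F;x)\ge 1$ it follows essentially the same route as the paper: the paper sets $F_c=\{x\in F:\overline{\theta}^s(F;x)<c\}$, localizes to the subset $\tilde F_\delta$ on which the density estimate holds at every scale below $\delta$, recenters each cover ball at a point of that subset using the ultrametric identity $B_i=B_{p^{-n_i}}(x)$, and concludes $\mathcal{H}^s(F_c)\le c\,\mathcal{H}^s(F_c)$ --- the same mechanism as your decomposition $B_\epsilon=\bigcup_N B_{\epsilon,N}$. (Your disjointification step in that half is harmless but unnecessary: countable subadditivity already gives $\mathcal{H}^s(B_{\epsilon,N})\le\sum_j\mathcal{H}^s(F\cap B_j)$, which is all the paper uses.) Where you genuinely add content is the inequality $\overline{\theta}^s(F;x)\le 1$, which the paper dispatches with a citation to Falconer's Proposition 5.1; you write out the Vitali-type argument in full, and your key observation is sound: since any two balls in $\mathbb{Z}_p$ are nested or disjoint and there are only countably many balls of each radius, a family of balls with radii bounded by $\delta$ has a disjoint subfamily of maximal balls with the same union, so the covering step costs no constant and the density bound is exactly $1$ rather than the constants one loses in $\mathbb{R}^n$. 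The only points worth making explicit in a final write-up are that $A_\epsilon$ is Borel (because $x\mapsto\mathcal{H}^s(F\cap B_{p^{-n}}(x))$ is locally constant for each fixed $n$, so $\overline{\theta}^s(F;\cdot)$ is a limsup of continuous functions) and that the restriction of $\mathcal{H}^s$ to the finite-measure set $F$ is a finite Borel measure on a compact metric space, hence outer regular, which justifies the passage $V\downarrow A_\epsilon$; neither is a gap.
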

	
\begin{proof} Firstly we prove that $\overline{\theta}^s(F;x)\ge 1$ for $\mathcal{H}^s$-almost every $x\in F$.  For $c>0$ define $F_c=\{x\in F; \overline{\theta}^s(F;x)< c \}$, and for $\delta>0$ let $\tilde{F}_{\delta}$ be the set 
    $$\tilde{F}_{\delta}=\{x\in F_c;\mathcal{H}^s(F\cap B_{p^{-n}}(x))<cp^{-ns} \text{ if } p^{-n}<\delta\}.$$
Let $\mathcal{U}$ be a $\delta$-cover of $F_c$ by balls $B_i$ of radius $p^{-n_i},$ in particular $\mathcal{U}$ is also a cover of $\tilde{F}_{\delta}$. For each $x\in B_i\cap \tilde{F}_{\delta}$ we have $B_i=B_{p^{-n_i}}(x).$ Thus, by the definition of $\tilde{F}_{\delta}$
    $$\mathcal{H}^s(\tilde{F}_{\delta}\cap B_i)=\mathcal{H}^s(\tilde{F}_{\delta}\cap B_{p^{-n_i}}(x))\le \mathcal{H}^s(F\cap B_{p^{-n_i}}(x)) <cp^{-n_i\cdot s}.$$
Then  
    $$\mathcal{H}^s(\tilde{F}_{\delta})\le \sum_{i;B_i\cap \tilde{F}_{\delta}\neq \emptyset} \mathcal{H}^s(\tilde{F}_{\delta}\cap B_i)< c\sum_{i;B_i\cap F_c\neq \emptyset} p^{-n_i\cdot s},$$
Since $\mathcal{U}$ is arbitrary we get
    $$\mathcal{H}^s(\tilde{F}_{\delta}) \le c \mathcal{H}^s(F_c).$$
Note that
    $$F_c=\bigcup_{\delta>0} \tilde{F}_{\delta}$$
and then 
    \begin{equation}\label{eq.4.1}\mathcal{H}^s(F_c)\le c \mathcal{H}^s(F_c).
    \end{equation}
If $c<1$ then the inequality (\ref{eq.4.1}) only happens if $\mathcal{H}^s(F_c)=0$. Denote $c_n=1-\dfrac{1}{n}.$ Note that 
        $$F_1=\bigcup_{n\ge 1} F_{c_n}$$
    and then $\mathcal{H}^s(F_1)=0$. Since
    $F=F_1\cup (F\setminus F_1)$ we conclude that 
        $$\overline{\theta}^s(F;x)\ge 1, \text{ for }\ \mathcal{H}^s- \text{almost all} \ x\in F.$$
The other inequality is not difficult, it follows of ideas from \cite{Falconer} prop 5.1 and we omit here.

\end{proof}	

%begin{remark}
%    We note that in the case $s=1$ we have an analogous of Lebesgue differentiation theorem. In fact, in this case, since $\mu_p(F\cap B_{p^{-n}}(x))\le \mu_p(B_{p^{-n}}(x))$ for all $x\in F$ we have that
 %   $$\limsup_{n\to \infty}\dfrac{\mu_p(F\cap B_{p^{-n}}(x))}{\mu_p(B_{p^{-n}}(x))}=1$$
  %  for $\mu_p$-almost all $x\in F$.
%\end{remark}
Theorem A is in fact a consequence of the following result.

   \begin{lemma}
Let $F\subset \mathbb{Z}_p$ be a closed set and suppose that there is a constant $C>0$ such that
\[\limsup_{|I|\to +\infty}\frac{|F\cap B\cap I|}{|B\cap I|^s} \le C,\]
for any ball $B\subset \mathbb{Z}_p$. Then we have
\[\mathcal{H}_s(\Pi_p(F))\le C \mathcal{H}^s(F).\]
\end{lemma}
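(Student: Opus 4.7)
The plan is to apply the hypothesis ball-by-ball on a finite $\delta$-cover of $F$ and then let $\delta\to 0$. Compactness of $F$, which is a closed subset of the compact space $\mathbb{Z}_p$, is the key ingredient that allows combining the asymptotic thresholds from the hypothesis into a single one.

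First, given $\varepsilon>0$ and $\delta>0$, I choose a finite $\delta$-cover $\{B_i=B_{r_i}(a_i)\}_{i=1}^N$ of $F$ that nearly realizes the Hausdorff $s$-content, i.e., $\sum_{i=1}^N r_i^s \le \mathcal{H}^s_\delta(F)+\varepsilon$. The existence of such a \emph{finite} cover is what compactness buys us. For each $i$, applying the hypothesis to $B=B_i$ supplies a threshold $L_i$ such that $|F\cap B_i\cap I|\le(C+\varepsilon)|B_i\cap I|^s$ for every interval $I$ with $|I|\ge L_i$. Setting $L=\max_i L_i$, which is finite precisely because the cover is finite, for any $I$ with $|I|\ge L$ we get
\[
|\Pi_p(F)\cap I|\le|F\cap I|\le\sum_{i=1}^N|F\cap B_i\cap I|\le(C+\varepsilon)\sum_{i=1}^N|B_i\cap I|^s.
\]

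Next I invoke Lemma \ref{lem1} to estimate $|B_i\cap I|\le r_i|I|+1$. For $0<s\le 1$, the subadditivity $(a+b)^s\le a^s+b^s$ yields $|B_i\cap I|^s\le r_i^s|I|^s+1$, hence
\[
\frac{|F\cap I|}{|I|^s}\le(C+\varepsilon)\Big(\sum_{i=1}^N r_i^s+\frac{N}{|I|^s}\Big).
\]
The case $s>1$ is not a genuine obstacle: $\mathbb{Z}_p$ has Hausdorff dimension $1$, so $\mathcal{H}^s(F)=0$, and a nearly identical estimate (with the subadditivity replaced by $(a+b)^s\le 2^s(a^s+b^s)$) combined with $\mathcal{H}^s_\delta(F)\to 0$ forces $\mathcal{H}_s(\Pi_p(F))=0$, so the asserted inequality is trivial.

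Taking $\limsup_{|I|\to\infty}$ kills the $N/|I|^s$ term because $s>0$ and $N$ is fixed, giving $\mathcal{H}_s(\Pi_p(F))\le(C+\varepsilon)(\mathcal{H}^s_\delta(F)+\varepsilon)$. Letting $\delta\to 0$ and then $\varepsilon\to 0$ concludes $\mathcal{H}_s(\Pi_p(F))\le C\mathcal{H}^s(F)$. The main conceptual obstacle is the interchange between the $|I|\to\infty$ limit, which appears inside the definition of $\mathcal{H}_s$ and of the hypothesis, and the Hausdorff cover, which a priori need not be finite: the individual thresholds $L_i$ depend on the ball $B_i$, and one has to coalesce them into a single $L$ valid for every cover element simultaneously. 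Compactness resolves this cleanly, and the only other technical nuisance, the additive $+1$ error in Lemma \ref{lem1}, is absorbed harmlessly by dividing by $|I|^s$ and letting $|I|\to\infty$.
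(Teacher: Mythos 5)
Your proof is correct and follows essentially the same route as the paper's: compactness gives a finite cover nearly realizing $\mathcal{H}^s(F)$, the counting is split over the cover balls, the hypothesis is applied ball by ball, and Lemma \ref{lem1} converts $|B_i\cap I|$ into $r_i|I|$ up to a harmless $+1$. The only differences are cosmetic — you make the limsup thresholds and the $s>1$ case explicit, where the paper instead factors the ratio as $\frac{|F\cap B_i\cap I|}{|B_i\cap I|^s}\cdot\bigl(\frac{|B_i\cap I|}{|I|}\bigr)^s$ and passes to the limit directly.
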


\begin{proof}
If $\mathcal{H}^s(F)=\infty$ then we have nothing to do. Suppose $\mathcal{H}^s(F)<\infty$ and choose $\epsilon>0$, we can find balls $B_1,....,B_n$ covering $F$ such that (here one uses that $F$ is compact)
\[\sum_{i=1}^n \text{diam}(B_i)^s \le \mathcal{H}^s(F)+\epsilon.\]
We have then

\begin{align*}
\limsup_{|I|\to \infty}\frac{|F\cap I|}{|I|^s} &\leq \sum_{i=1}^n \limsup_{|I|\to \infty}\frac{|F\cap B_i \cap I|}{|I|^s}\\
&= \sum_{i=1}^n \limsup_{|I|\to \infty}\frac{|F\cap B_i \cap I|}{|B_i\cap I|^s} \cdot \left(\frac{|B_i \cap I|}{|I|}\right)^s\\
&\le C \sum_{i=1}^n \text{diam}(B_i)^s \le C \mathcal{H}^s(F)+C\epsilon.
\end{align*}

Taking $\epsilon$ going to zero we conclude that $\mathcal{H}_s(F)\le C \mathcal{H}^s(F)$.

\end{proof}
We observe that whenever $s=1$ the constant $C$ can be taken equal to $1$ and we have Theorem A.
\begin{mydef2}
Let $F\subset \mathbb{Z}_p$ be a closed set and assume $\mathcal{H}^s(F)<\infty$. Suppose that there is a constant $C>0$ such that
\[\limsup_{|I|\to \infty}\frac{|F\cap B\cap I|}{|B\cap I|^s} \le C,\]
for any ball $B\subset \mathbb{Z}_p$. Then we have
\[\mathcal{H}_s(\Pi_p(F))\le \int_{F} \eta_s(F;x) d\mathcal{H}^s(x).\]
\end{mydef2}

\begin{proof}
We know that
\[\limsup_{n\to \infty} \frac{\mathcal{H}^s(F\cap B_{p^{-n}}(x))}{p^{-ns}}=1\]
for all $x\in F$, except for a set $F_0\subset F$ of $\mathcal{H}^s$-measure zero. Choose $\epsilon >0$. For $m\in \mathbb{N}$ consider 
$$A_m=\left \{x\in F; \limsup_{|I|\to \infty} \frac{|F\cap B_{p^{-n}}(x)\cap I|}{|B_{p^{-n}}(x)\cap I|^s} > \eta_s(F;x) + \epsilon\ , \ \mbox{for some} \ n>m \right \}.$$
Then $A_m\supset A_{m+1}$ and $\cap_m A_m=\emptyset$. Fix $n_0$ such that $\mathcal{H}^s(A_{n_0})<\epsilon$. We rewrite $A_{n_0}:=F_1$.

For each $x\in F\setminus (F_0\cup F_1)$ we can choose a ball $B(x)$, containing $x$, such that $|B(x)|<p^{-n_0}$ and
\[\frac{\mathcal{H}^s(F\cap B(x))}{\text{diam}(B(x))^s} \ge 1-\epsilon.\]
We can then select a countable family of disjoint balls $B_1,\,B_2,...$ covering $F\setminus (F_0\cup F_1)$ and such that $\text{diam}(B_i)<p^{-n_0}$ and
\[\frac{\mathcal{H}^s(F\cap B_i)}{\text{diam}(B_i)^s} \ge 1-\epsilon,\]
for all $i$. Let $n_1$ such that 
\[\mathcal{H}^s(F\setminus (F_0\cup F_1\cup B_1\cup...\cup B_{n_1}))<\epsilon.\]

Define $F_2 = F\setminus (B_1 \cup...\cup B_{n_1})$, one has that $F_2$ is closed and $\mathcal{H}^s(F_2)\le 2\epsilon$. Using the previous lemma one gets
\begin{align*}
\mathcal{H}_s(\Pi_p(F))&\le \mathcal{H}_s(\Pi_p(F\cap (B_1 \cup...\cup B_{n_1})))+\mathcal{H}_s(\Pi_p(F_2)) \\ &\le \mathcal{H}_s(\Pi_p(F\cap (B_1 \cup...\cup B_{n_1})))+2\epsilon C.
\end{align*}
On the other hand
\begin{align*}
\mathcal{H}_s(\Pi_p(F\cap (B_1 \cup...\cup B_{n_1})))&\le \sum_{i=1}^{n_1} \limsup_{|I|\to \infty}\frac{|F\cap B_i \cap I|}{|I|^s}\\
&= \sum_{i=1}^{n_1} \limsup_{|I|\to \infty}\frac{|F\cap B_i \cap I|}{|B_i\cap I|^s} \cdot \left(\frac{|B_i \cap I|}{|I|}\right)^s\\
&= \sum_{i=1}^{n_1} \limsup_{|I|\to \infty}\frac{|F\cap B_i \cap I|}{|B_i\cap I|^s} \cdot \text{diam}(B_i)^s\\
&= \sum_{i=1}^{n_1} \limsup_{|I|\to \infty}\frac{|F\cap B_i \cap I|}{|B_i\cap I|^s} \cdot \mathcal{H}^s(F\cap B_i)\cdot \frac{\text{diam}(B_i)^s}{\mathcal{H}^s(F\cap B_i)}\\
&\le \frac{1}{1-\epsilon}\sum_{i=1}^{n_1} \limsup_{|I|\to \infty}\frac{|F\cap B_i \cap I|}{|B_i\cap I|^s} \cdot \mathcal{H}^s(F\cap B_i).\\
\end{align*}

Since $\mathcal{H}^s(F\cap B_i)=\left( \mathcal{H}^s(F\cap B_i\setminus F_1) + \mathcal{H}^s(F_1\cap B_i) \right)$ we have
\begin{align*}
\displaystyle\limsup_{|I|\to \infty}\frac{|F\cap B_i \cap I|}{|B_i\cap I|^s} \cdot \mathcal{H}^s(F\cap B_i) \le \int_{F\cap B_i\setminus F_1} (\eta_s(F;x)+ \epsilon) d\mathcal{H}^s(x) + C \cdot \mathcal{H}^s(F_1\cap B_i).
\end{align*}
Summing over $i$ one concludes that
\begin{align*}
\mathcal{H}_s(\Pi_p(F\cap (B_1 \cup...\cup B_{n_1})))&\le \frac{1}{1-\epsilon}\sum_{i=1}^{n_1} \limsup_{|I|\to \infty}\frac{|F\cap B_i \cap I|}{|B_i\cap I|^s} \cdot \mathcal{H}^s(F\cap B_i)\\
&\le \frac{1}{1-\epsilon} \left[\int_{F} (\eta_s(F;x)+ \epsilon) d\mathcal{H}^s(x) + C \cdot \mathcal{H}^s(F_1)\right]\\
&\le \frac{1}{1-\epsilon} \left[\int_{F} \eta_s(F;x) d\mathcal{H}^s(x) + \epsilon \mathcal{H}^s(F)+ C \epsilon \right].
\end{align*}
Therefore
\begin{align*}
\mathcal{H}_s(\Pi_p(F))&\le \mathcal{H}_s(\Pi_p(F\cap (B_1 \cup...\cup B_{n_1})))+2\epsilon C  \\  &\le \frac{1}{1-\epsilon} \left[\int_{F} \eta_s(F;x) d\mathcal{H}^s(x) + \epsilon \mathcal{H}^s(F)+ C \epsilon \right]+2\epsilon C.
\end{align*}
Taking $\epsilon$ going to zero one obtains
\[\mathcal{H}_s(F)\le \int_{F} \eta_s(F;x) d\mathcal{H}^s(x).\]
\end{proof}
For each $n\in \mathbb{N}$ we can look for $\underline{\theta}^s_n(F)=\min_{x\in G_n} \dfrac{\mathcal{H}^s(F\cap B_{p^{-n}}(x))}{p^{-ns}}$ and we define %and $\overline{\theta}^s_n(F)=\max_{x\in G_n}\dfrac{\mathcal{H}^s(F\cap B_{p^{-n}}(x))}{p^{-ns}}$.  

\begin{equation}\label{dM}\underline{\theta}^s_{\ast}(F)=\limsup_{n\to \infty} \underline{\theta}^s_n(F).
\end{equation}
%and
%\begin{equation}\label{dm}
%\overline{\theta}^s_{\ast}(F)=\liminf_{n\to \infty} \overline{\theta}^s_n(F).
%\end{equation}
 If $F$ is a $s$-Ahlfors-David regular set then there exists $C>0$ such that (\ref{ADset}) occurs. In particular, 
\begin{equation} \label{ADset1}   
C^{-1}\le\underline{\theta}^s_{\ast}(F)\le C.
\end{equation}

%\begin{mydef3}\label{prop.4.2}
%Let $F\subset \mathbb{Z}_p$ be a closed set such that $\mathcal{H}^s(F)< \infty$. We can find an explicit constant $\overline{\theta}_{\ast}^s(F)\in [0,+\infty]$ such that
%\begin{equation}
%\int_F \nu_s(F;x) d\mathcal{H}^s(x)\le \overline{\theta}_{\ast}^s(F) \underline{\mathcal{H}}_s(\Pi_p(F))
%\end{equation}
%Moreover, assume that there is an $\mathcal{H}^s$-integrable function $g$ such that $\eta_s(F;x,n)\le g(x)$ for $\mathcal{H}^s$-almost all $x$ in $F$. Then we can find an explicit constant $\underline{\theta}_{\ast}^s(F)\in [0,+\infty]$ such that 
%\begin{equation}
%\int_F \eta_s(F;x)d\mathcal{H}^s(x)\ge \underline{\theta}_{\ast}^s(F) \mathcal{H}_s(\Pi_p(F))
%\end{equation}
%We can say more: If $F$ is $s$-Ahlfors-David regular set then $$0<\underline{\theta}_{\ast}^s(F)\le \overline{\theta}_{\ast}^s(F)< +\infty.$$
%\end{mydef3}

In the following result we remove the hypotheses $\eta_s(F;x,n)\le C, \ \forall x, \ n\in \mathbb{N}$ but this implies that we need to compensate with a constant in the term of $\mathcal{H}_s.$ Precisely 

\begin{mydef3}\label{prop.4.2}
Let $F\subset \mathbb{Z}_p$ be a closed set such that $\mathcal{H}^s(F)< \infty$ and assume that there is an $\mathcal{H}^s$-integrable function $g$ such that $\eta_s(F;x,n)\le g(x)$ for $\mathcal{H}^s$-almost all $x$ in $F$. Then we can find an explicit constant $\underline{\theta}_{\ast}^s(F)\in [0,+\infty]$ such that 
\begin{equation}
\underline{\theta}_{\ast}^s(F) \mathcal{H}_s(\Pi_p(F)) \le \int_F \eta_s(F;x)d\mathcal{H}^s(x).
\end{equation}
We can say more: If $F$ is a $s$-Ahlfors-David regular set then $$0<\underline{\theta}_{\ast}^s(F)<+\infty.$$

%There Assume that there exists $C>0$ such that for any $x\in \mathcal{F}_p(F)$ and $n\in \mathbb{N}$ we have $\eta_s(F;x,n)\le C$. Then
%\[\mathcal{H}_s(F\cap \mathbb{N})\leq \int_{\mathcal{F}_p(F)} \eta_s(F;x) d\mathcal{H}^s(x).\]
\end{mydef3}
\begin{proof}

For $i\in G_n$ we write $a_i=\eta_s(F;i,n)$. Then, if $A_i=F\cap B_{p^{-n}}(i)$ we have   
   $$\eta_s(F;x,n)=\sum_{i\in G_n}a_i\chi_{A_i}(x), \ x\in F.$$
   We note that since $\eta_s(F;x,n)$ is bounded by an $\mathcal{H}^s$-integrable function we can use the reverse Fatou's Lemma, that is,
   $$\int_F \eta_s(F;x)d\mathcal{H}^s(x)=\int_F\limsup_n\eta_s(F;x,n)d\mathcal{H}^s(x)\ge \limsup_n\int_F\eta_s(F;x,n)d\mathcal{H}^s(x).$$
   Note that
   $$\int_F \eta_s(F;x,n)d\mathcal{H}^s(x)=\int_F\sum_{i\in G_n}a_i\chi_{A_i}(x)=\sum_{i\in G_n}a_i\mathcal{H}^s(A_i).$$
   Using Proposition \ref{prop.3.1} we have
   \begin{eqnarray*}
   \int_F \eta_s(F;x,n)d\mathcal{H}^s(x)=\sum_{i\in G_n}\mathcal{H}_s(F\cap B_{p^{-n}}(i)\cap \mathbb{Z})\dfrac{\mathcal{H}^s(A_i)}{p^{-ns}}\\ \ge \dfrac{\min_{i\in G_n}\mathcal{H}^s(F\cap B_{p^{-n}}(i))}{p^{-ns}}\cdot \sum_{i\in G_n}\mathcal{H}_s(F\cap B_{p^{-n}}(i)\cap \mathbb{Z}).
    \end{eqnarray*} 
   \textbf{Claim.} $\sum_{i\in G_n} \mathcal{H}_s(F\cap B_{p^{-n}}(i)\cap \mathbb{Z})\ge \mathcal{H}_s(F\cap \mathbb{Z})$.
   
   In fact, note that
    $\displaystyle \sum_{i\in G_n} \mathcal{H}(F\cap B_{p^{-n}}(i)\cap \mathbb{Z})=\sum_{i\in G_n} \limsup_{|I|\to \infty}\dfrac{|F\cap B_{p^{-n}}(i)\cap I|}{|I|^s}\ge \liminf_{|I|\to \infty} \sum_{i\in G_n}|F\cap B_{p^{-n}}(i)\cap I|=\limsup_{|I|\to \infty}\dfrac{|F\cap I|}{|I|^s}=\mathcal{H}_s(F\cap \mathbb{Z})$.
    Using the claim we have 
   $$\int \eta_s(F;x,n)d\mathcal{H}^s(x)\ge \dfrac{\min_{i\in G_n}\mathcal{H}^s(F\cap B_{p^{-n}}(i))}{p^{-ns}}\cdot \mathcal{H}_s(F\cap \mathbb{Z}).$$
    Taking the limsup we are done. The last statement follows from \ref{ADset1}.
    %Any $\nu_s$ is a non-negative measurable function and then using the Fatou's Lemma we have
    %$$\int \nu_s(F;x)d\mathcal{H}^s(x)=\int \liminf_{n\to \infty}\nu_s(F;x,n)d\mathcal{H}^s(x)\le \liminf_{n\to \infty}\int \nu_s(F;x,n)d\mathcal{H}^s(x).$$
    %Since
    %$$\int \nu_s(F;x,n)d\mathcal{H}^s(x)=\sum_{i\in G_n}\int_Fa_i\chi_{A_i}(x)d\mathcal{H}^s(x)=\sum_{i\in G_n}a_i \mathcal{H}^s(A_i)$$
  %and by the proof of Proposition \ref{prop.3.1} $$a_i=\nu_s(F;i,n)=\dfrac{\underline{\mathcal{H}}(F\cap B_{p^{-n}}(x)\cap \mathbb{Z})}{p^{-ns}}.$$
   % we have
   %\begin{eqnarray*}\int %\nu_s(F;x,n)d\mathcal{H}^s(x)=\sum_{i\in G_n} \dfrac{\underline{\mathcal{H}}(F\cap B_{p^{-n}}(i)\cap \mathbb{Z})}{p^{-ns}}\cdot \mathcal{H}^s(A_i) \\
   % \le  \dfrac{\max_{i\in G_n}\mathcal{H}^s(F\cap B_{p^{-n}}(i))}{ p^{-ns}}\cdot \sum_{i\in G_n}\underline{\mathcal{H}}(F\cap B_{p^{-n}}(x)\cap \mathbb{Z})
    %\\
   % \le \dfrac{\max_{i\in G_n}\mathcal{H}^s(F\cap B_{p^{-n}}(i))}{\cdot p^{-ns}}\cdot \liminf_{|I|\to \infty}\sum_{i\in G_n}\dfrac{|F\cap B_{p^{-n}}(i)\cap I|}{|I|^s} \\
    %\end{eqnarray*}
   % \textbf{Claim 1.} $\sum_{i\in G_n} \mathcal{H}_s(F\cap B_{p^{-n}}(i)\cap \mathbb{Z})\le \underline{\mathcal{H}}_s(F\cap \mathbb{Z}).$ 
    
    % By claim 1 we have
   %$$\int\nu_s(F;x)d\mathcal{H}^s(x)\le \liminf_{n\to \infty} \overline{\theta}^s_n(F)\cdot \underline{\mathcal{H}}_s(F\cap \mathbb{Z})=\overline{\theta}^s_{\ast}(F) \underline{\mathcal{H}}_s(F\cap \mathbb{Z}).$$
   
\end{proof}

\begin{remark}
Let $F\subset \mathbb{Z}_p$ be a closed set and denote by $\mathcal{U}$ the set of all possible covers of $F$ by balls (each such cover is finite since the set is compact). Given $P\in \mathcal{U}$, we denote by $\text{diam}(P)$ the supremum of the diameters of the balls in $P$. Now, for $P\in \mathcal{U}$ define
$$\underline{\theta}^s_P(F)=\min_{B\in P} \dfrac{\mathcal{H}^s(F\cap B)}{\text{diam}(B)^s},$$ and define  

\begin{equation}\label{dM}\underline{\theta}^s_{\mathcal{U}}(F)=\lim_{\delta\to 0} \sup \{\underline{\theta}^s_P(F):\,P\in \mathcal{U},\, \text{diam}(P)<\delta\}.
\end{equation}
The proof of theorem C can easily be modified to prove that under the same hypothesis one has
\begin{equation*}
\int_F \eta_s(F;x)d\mathcal{H}^s(x)\ge \underline{\theta}_{\mathcal{U}}^s(F) \mathcal{H}_s(\Pi_p(F)).
\end{equation*}
\end{remark}

The following theorem is a consequence of theorems B and C together with proposition \ref{etas:teorianumeros}.

    \begin{mydef4}\label{cor1.3}\label{cor1.3}
    Let $F\subset \mathbb{Z}_p$ be a closed set such that $\mathcal{H}_s(\Pi_p(F))>0$. Suppose that $\mathcal{H}^s(F)<+\infty$. If either  
    \begin{enumerate}
    \item[(a)] there is a constant $C>0$ such that
\[\limsup_{|I|\to +\infty}\frac{|F\cap B\cap I|}{|B\cap I|^s} \le C,\]
for any ball $B\subset \mathbb{Z}_p$ or
    \item[(b)] There is an  $\mathcal{H}^s$-integrable function $g$ such that $\eta_s(F,x, n)\le g(x)$ for $\mathcal{H}^s$-almost all $x\in F$ and $F$ is $s$-Ahlfors-David regular.
    \end{enumerate} 
    then, for an unbounded sequence $\{n_j\}_j$ of natural numbers we have that $\Pi_p(F)\subset \mathbb{Z}$ is $s$-abundant in $\mathcal{A}^{p^{n_j}}_f$. %In particular, if $E\subset \mathbb{Z}$ is such that $\Pi_p(\mathcal{F}_p(E))=E$, we have that $E$ is $s$-abundant in $\mathcal{A}^{p^{n_j}}$
    %In particular, for any $n\ge 1$ the set $E-E$ contains infinitely many elements multiples of $p^n$.
\end{mydef4}

Now we relate the box-dimension of $F\subset \mathbb{Z}_p$ with the counting-dimension of $\Pi_p(E)\subset \mathbb{Z}$. 
%In order to obtain more information about the relation between $E$ and $\mathcal{F}_p(E)$ we use the Box-dimension to obtain

\begin{mydef5}
   For any set $F\in \mathbb{Z}_p$ we have
    $$D(\Pi_p(F))\le BD(F).$$
In particular, for any set $E\subset \mathbb{Z}$ we have
    $$D(E)\le BD(E).$$
    Moreover, there is a closed set $F\subset \mathbb{Z}_5$ such that 
    $$D(\Pi_5(F))=BD(F)>\underline{BD}(F)\ge HD(F),$$
    where $\underline{BD}(F)$ is the lower Box dimension of $F$.
\end{mydef5}
\begin{proof}
Let $t< D(\Pi_p(F))$, then $\mathcal{H}_t(\Pi_p(F))=\infty$. We will show that there are infinite values of $n$ such that $N_{p^{-n}}(F)\geq p^{nt}$, which implies $BD(F)\geq t$ and hence $D(\Pi_p(F))\le BD(F)$. Suppose this not the case, then there is $n_0$ such that $N_{p^{-n}}(F)< p^{nt}$ for all $n>n_0$. Let $I$ be any interval such that $|I|>p^{n_0}$, then $p^{n-1}<|I|\leq p^{n}$ where $n$ is such that $N_{p^{-n}}(F)< p^{nt}$. Let $B_1,...,B_l$ be all the balls of radius $p^{-n}$ intersecting $F$, clearly $l\leq p^{nt}$. We get
\begin{align*}
\frac{|F\cap I|}{|I|^t} &\leq \sum_{j=1}^l \frac{|B_{j}\cap I|}{|I|^t}\\
&\leq p^{nt} \cdot \frac{(p^{-n}|I|+1)}{|I|^t}\leq  \frac{2p^{nt}}{(p^{n-1})^t}=2p^t.
\end{align*}
This last inequality contradicts the fact that $\mathcal{H}_t(\Pi_p(F))=\infty$ and we obtain the desired result.
%We can ask if in fact 
   % $$D(\Pi_p(F))\le HD(\overline{F}^p).$$

 %\begin{proposition}
     Next we proceed to the proof that there is a closed set $F\subset \mathbb{Z}_5$ such that $$D(\Pi_5(F))=BD(F)>HD(F).$$
% \end{proposition}
% \begin{proof}
     We can write $\mathbb{Z}_5=B_{5^{-1}}(0)\cup B_{5^{-1}}(1)\cup B_{5^{-1}}(2)\cup B_{5^{-1}}(3)\cup B_{5^{-1}}(4).$
     Consider $\mathcal{I}_1=\{0,2,4\}$ and $E_1=\cup_{i\in \mathcal{I}_1}B_{5^{-1}}(i)$. For any $i\in \mathcal{I}_1$ we have
     $B_{5^{-1}}(i)=\cup_{j=0}^{4} B_{5^{-2}}(i+j\cdot 5^1)$, then let $\mathcal{I}_2=\{i+j\cdot 5^1; i,j\in \mathcal{I}_1\}$ and $E_2=\cup_{i\in \mathcal{I}_2} B_{5^{-2}}(i).$ In the same way we write $\mathcal{I}_3=\{i+j\cdot 5^2;i\in \mathcal{I}_2, j\in \mathcal{I}_1\}$ and $E_3=\cup_{i\in \mathcal{I}_3}B_{5^{-3}}(i).$ Consider $\mathcal{I}_4=\{i+j\cdot 5^3; i\in \mathcal{I}_3, j\in \mathcal{I}_1\}$ and $E_4=\cup_{i\in \mathcal{I}_4}B_{5^{-4}}(i)$. Finally we write $\mathcal{I}_5=\{i+j\cdot 5^{4};i\in \mathcal{I}_4, j\in \mathcal{I}_1\}$ and $E_5=\cup_{i\in \mathcal{I}_5}B_{5^{-5}}(i)$. For $n\in \mathbb{N}$ let $k_n=5^n$. For $n\ge 1$ and $k_{2n-1}<k\le k_{2n}$ let $\mathcal{I}_k=\mathcal{I}_{k-1}$ and for $k_{2n}<k\le k_{2n+1}$ let $\mathcal{I}_k=\{i+j\cdot 5^{k-1};i\in \mathcal{I}_{k-1}, j\in \mathcal{I}_1\}.$ We consider $E_k=\cup_{i\in \mathcal{I}_k}B_{5^{-k}}(i)$. Note that $E_k\supset E_{k+1}$ and since $E_k$ is a compact set $F=\cap_{k\in \mathbb{N}} E_k$ is a non-empty compact set. Moreover, $\mathcal{I}_k\subset \mathcal{I}_{k+1}$. Note that $F\cap \mathbb{Z}=\cup_{k\in \mathbb{N}} \mathcal{I}_k$. We also note that $E_{k_n}\cap [0,5^{k_n}]=\mathcal{I}_{k_n}$. We observe that
     $$|F\cap [5^{k_{2n}},5^{k_{2n+1}}]|\ge 3^{k_{2n+1}-k_{2n}-2}$$
     and
     $$D(F\cap \mathbb{Z})\ge \limsup_{n\to \infty}\dfrac{(k_{2n+1}-k_{2n}-2)\log 3}{\log (5^{k_{2n+1}}-5^{k_{2n}})}\ge \dfrac{4}{5}\cdot \dfrac{\log 3}{\log 5}.$$
     On the other hand 
     $$\displaystyle N_{5^{-k_{2n+2}}}(F)=3^{k_1}\cdot 3^{k_3-k_2}\cdot ... \cdot 3^{k_{2n+1}-k_{2n}}=3^{\dfrac{k_{2n+2}+5}{6}}.$$
     In particular,
     $$\underline{BD}(F)\le \liminf_{n\to \infty} \dfrac{(k_{2n+2}+5)\log 3}{6 k_{2n+2}\log 5}<\dfrac{1}{6}\cdot \dfrac{\log 3}{\log 5}\le D(F\cap \mathbb{Z}).$$
     In particular, $HD(F)\le\underline{BD}(F)<D(F\cap \mathbb{Z})$
 \end{proof}
 
\begin{remark}
Notice that there was nothing especial about $p=5$ in the example in the previous theorem, we could have chosen any prime $p$.
\end{remark}

We finalize with Theorem F which gives us a necessary condition for a set $E\subset \mathbb{Z}$ belongs to $\mathcal{S}_p$ or $E\subset \mathbb{N}$ belong to $\mathcal{S}^{\ast}_p$.

\begin{mydef6}
Let $E\in \mathcal{S}_p$ or $E\in \mathcal{S}^{\ast}_p$ such that $BD(E)=s$. Then for any $\delta>0$, for $n\in \mathbb{N}$ large enough there are at most $k_n\le p^{n(s+\delta)}$ arithmetic progressions, $A_1,...,A_{k_n}$ of common difference $p^n$ such that $E\subset \cup_{i=1}^{k_n}A_i$, $\cup_{i=1}^{k_{n+1}}A_i\subset \cup_{i=1}^{k_n}A_i$ and
    $$E=\bigcap_{n\ge 1}\bigcup_{i=1}^{k_n}A_i.$$
\end{mydef6}

\begin{proof}
    We fix  $\Pi_p(\mathcal{F}_p(E))=\mathcal{F}_p(E)\cap \mathbb{Z}$, the case $\Pi_p(\mathcal{F}_p(E))=\mathcal{F}_p(E)\cap \mathbb{N}$ is quite similar. In the combinatorial proof of Theorem A, we have that the set $G_n:=G_n(F)=\{a\in \mathbb{Z}/p^n\mathbb{Z}; B_{p^{-n}}(a)\cap F\neq \emptyset \}$ play an important work. Using the definition of Box-counting dimension, we have that $|G_n(\mathcal{F}_p(E))|=O(p^{ns}).$ Moreover,
    $$\mathcal{F}_p(E)=\bigcap_{n\ge 1}\bigcup_{a\in G_n}B_{p^{-n}}(a).$$
    This implies
    $$E=\Pi_p(\mathcal{F}_p(E))=\bigcap_{n\ge 1}\bigcup_{a\in G_n}B_{p^{-n}}(a)\cap \mathbb{Z}.$$
    Any set $A_j=B_{p^{-n}}(j)\cap \mathbb{Z}$ is an arithmetic progression and by definition of Box-counting dimension for any $\delta>0$ for $n$ large enough we have $|G_n|=k_n\le p^{n(s+\delta)}$.
\end{proof}
\section{Cantor sets in $\mathbb{Z}$}

    In this section we present a class of subsets which provide non trivial examples for the development of this theory. First we start with the classical ternary Cantor in $\mathbb{N}$ which is given in the same way as the ternary Cantor set in $[0,1]$, that is, the set $K\subset [0,1]$  such that any point in $K$ has digits $0$ and $2$ in its expansion in base 3. We can define the ternary Cantor set in $\mathbb{N}$ as
    $$E=\{x=x_0+x_1\cdot 3^1+x_2\cdot 3^2+...+x_n\cdot 3^n; n\in \mathbb{N}, \ x_i\in \{0,2\}\}.$$
    In particular,
    $$\mathcal{F}_3(E)=\left\{\sum_{i\ge 0}x_i3^i; x_i\in \{0,2\}\right\}$$
    and  by Lemma \ref{inv1} we have $\Pi_3(\mathcal{F}_3(E))=\mathcal{F}_3(E)\cap \mathbb{N}=E.$
    Note that the $p$-adic ball $B_{3^{-n}}(x)\cap \mathcal{F}_3(E)\neq \emptyset$ iff $x\in E$. Using this, we can compute the number of $z=\sum_{i=0}^N a_i\cdot 3^i\in B_{3^{-n}}(x)\cap \mathcal{F}_3(E)\cap (3^N,3^{N+1}]$. In fact, $z\equiv x \ (\mbox{mod.} \ 3^n)$ implies that we can choose $(a_n,a_{n+1},...,a_N)\in \{0,2\}^{N-n+1}$. Then
    $$|B_{3^{-n}}(x)\cap \mathcal{F}_3(E)\cap (3^N,3^{N+1}]|\le 2^{N-n+1}.$$
    If $I=(M,N]$ we take $k$ and $j$ such that $3^k<M\le 3^{k+1}$ and $3^j<N\le 3^{j+1}$ and then
    $I=(M,3^k]\cup (3^{k},3^{k+1}]\cup...\cup (3^{j-1},3^j]\cup (3^j,N].$ Therefore
    $$|B_{3^{-n}}(x)\cap \mathcal{F}_3(E)\cap I|\le \sum_{i=0}^{j+1-k} |B_{3^{-n}}(x)\cap \mathcal{F}_3(E)\cap (3^{k-1+i},3^{k+i}]|\le \sum_{i=0}^{j+1-k} 2^{k+i-n}.$$
    In particular, $|B_{3^{-n}}(x)\cap \mathcal{F}_3(E)\cap I|\le 2^{j-n+1}.$ It follows that, for $s=\dfrac{\log 2}{\log 3}$ we have
    $$\eta_s(\mathcal{F}_3(E);x,n)=\limsup_{|I|\to \infty}\dfrac{|B_{3^{-n}}(x)\cap \mathcal{F}_3(E)\cap I|}{|B_{3^{-n}}\cap I|}\le \dfrac{2^{j-n+1}}{3^{(j-1-n)s}}\le c, \ \forall \ n\in \mathbb{N}$$
     and for every  $x\in \mathbb{Z}_3,$ where $c$ is a positive constant. Note that if $K\subset [0,1]$ is the ternary Cantor set and $g:K\rightarrow \mathcal{F}_3(E)$ given by $g\left(\sum_{i\ge 0}\dfrac{x_i}{3^i}\right)=\sum_{i\ge 0}x_i\cdot 3^i$ then $g$ is a bi-Lipschitz map and then  $\mathcal{H}^s(\mathcal{F}_3(E))<+\infty$, the map $\eta_s(\mathcal{F}_3(E);\cdot, n)$ is bounded by an $\mathcal{H}^s$-integrable function. Moreover, $\mathcal{F}_3(E)$ is $s$-Ahlfors-David regular set. Thus, $\mathcal{F}_3(E)$ satisfies both Theorems B and C. Then, since $E=\Pi_p(\mathcal{F}_p(E))$ by Theorem D there is a sequence of arithmetic progressions $\{A_k\}$ each with common difference $3^{n_k}$, for $n_k\in \mathbb{N}$ and $|A_{k+1}|>|A_k|$ such that 
     $$|E\cap A_k|\ge c|A_k|^s, s=\dfrac{\log 2}{\log 3}.$$
     Moreover, by Theorem F, for each $n$ there are $k_n=O(p^{ns})$ arithmetic progressions, $A^{(n)}_1,...,A^{(n)}_{k_n}$ of common difference $3^{n}$ such that $E\subset \bigcup_{i=1}^{k_n}A^{(n)}_i$ and
     $$E\subset \bigcap_{n\ge 1}\bigcup_{i=1}^{k_n}A^{(n)}_i.$$

      This example can be generalized for any $p$ prime. Given a binary matrix $A=(a_{ij})_{0\le i,j\le p-1}$ we write
        $$E_A=\{x=\sum_{i=0}^n x_i\cdot p^i; n\in \mathbb{N}, a_{x_i,x_{i+1}}=1, \ \forall \ i\ge 1\}.$$
    In the same way 
        $$\mathcal{F}_p(E_A)=\{x=\sum_{i\ge 0} x_i\cdot p^i;a_{x_i,x_{i+1}}=1, \ \forall \ i\ge 1\}.$$
        In \cite{YG} Lima and Moreira showed that $D(E_A)=\dfrac{\log \lambda^{+}(A)}{\log p}:=s$ and $0<\mathcal{H}_s(E_A)<+\infty$, where $\lambda^{+}(A)$ is the Perron eigenvalue of $A$.
        In the similar way as above we can prove that for $s=\dfrac{\log \lambda^{+}(A)}{\log p}$,  there exists $C>0$ such that $\eta_s(\mathcal{F}_p(E_A))<C$ and $\mathcal{H}^s(\mathcal{F}_p(E_A))<C$. In particular, $\eta_s(\mathcal{F}_p(E_A))$ is an $\mathcal{H}^s$-integrable function. Moreover, $\mathcal{F}_p(E_A)$ is the image of a bi-Lipschitz map of a regular affine Cantor set on $\mathbb{R}$ with Hausdorff dimension $\dfrac{\log \lambda^+(A)}{\log p}.$ In particular for this $s$ we are in the hypoteses of our results in Theorems B and C, that is, every set $\mathcal{F}_p(E_A)$ is $s$-Ahlfors-David regular set and then  $\underline{\theta}^s_{\ast}(\mathcal{F}_p(E_A))\in (0,+\infty).$

This class of sets is such that $\Pi_p(\mathcal{F}_p(E))=\mathcal{F}_p(E)\cap \mathbb{N}=E$. Moreover, Cantor sets satisfy 
\begin{equation}\label{HD=BD}
HD(\mathcal{F}_p(E_A))=BD(E_A))=D(E_A).
\end{equation}
     All the results that we give for $E$, the ternary cantor set in $\mathbb{Z}$ we have for this class $E_A$ of Cantor sets in $\mathbb{Z}$. 

\subsection{Acknowledgements}
 The authors also thank to Paulo Ribenboim for several discussions about densities and Carlos G. Moreira and Y. Lima for his suggestions to improve the work.
The first author also thanks to CNPq for the grant - Alagoas Din\^amica 409198/2021-8 and FAPEAL grant E60030.0000002330/2022. The second author also would like to thank CNPq and FAPERJ for financial support.

\bibliographystyle{amsplain}

\end{document}